\newcommand{\ve}{\varepsilon}
\newcommand{\be}{\begin{equation}}
\newcommand{\ee}{\end{equation}}
\newcommand{\ba}{\begin{align}}
\newcommand{\ea}{\end{align}}
\newcommand{\abs}[1]{\lvert#1\rvert}
\newtheorem{theorem}{Theorem}[section]
\newtheorem{corollary}[theorem]{Corollary}
\newtheorem{lemma}{Lemma}[section]
\title[Heisenberg group determinants]{The Integer group determinants for the Heisenberg group of order $p^3$}
\author[M. Mossinghoff]{Michael J.  Mossinghoff}
\address{Center for Communications Research, Princeton, NJ, USA}
\email{m.mossinghoff@idaccr.org}
\author[C. Pinner]{Christopher Pinner}
\address{Department of Mathematics\\
         Kansas State University\\
         Manhattan, KS 66506, USA}
\email{pinner@math.ksu.edu}
\keywords{Group determinant, Heisenberg group, Lind-Lehmer constant, Mahler measure}
\subjclass[2010]{Primary: 11C20, 15B36; Secondary:  11B83, 11C08,  11G50, 11R06, 11R09, 11T22, 20H25, 20H30, 43A40}
\date{\today}
\begin{document}

\begin{abstract}
We establish a congruence satisfied by the integer group determinants for the non-abelian Heisenberg group of order $p^3$.
We characterize all determinant values coprime to $p$, give sharp divisibility conditions for multiples of $p$, and determine all values when $p=3$.
We also provide new sharp conditions on the power of $p$ dividing  the group determinants for $\mathbb Z_p^2$. 

For a finite group, the integer group determinants can be understood as corresponding to Lind's generalization of the Mahler measure. We speculate on the Lind-Mahler measure for  the discrete Heisenberg group and for two other infinite non-abelian groups
arising from symmetries of the plane and 3-space.
\end{abstract}

\maketitle

\section{Introduction}

Recall that for a group $G=\{ g_1,\ldots ,g_k\}$ and variables $a_{g_1},\ldots ,a_{g_k}$, the \textit{group determinant} of $G$ is the homogeneous polynomial of degree $k$ in the $a_{g_i}$ arising from the  $k\times k$ determinant  
\be \label{defDet} \mathscr{D}(a_{g_1},\ldots ,a_{g_k})=\det \left( a_{g_ig_j^{-1}}\right),\ee
where $i=1,\ldots ,k$ indexes rows and $j=1,\ldots ,k$ columns. The group determinant determines the group \cite{Formanek}.
Taussky-Todd asked what values the group determinant takes when the $a_g$ are all integers,
\be \label{DefS} \mathscr{S}(G)=\{  \mathscr{D}(a_{g_1},\ldots ,a_{g_k})\; : \; a_{g_1},\ldots ,a_{g_n}\in \mathbb Z\}, \ee
with particular interest  in the cyclic groups, where these are circulant determinants. For a $p$-group it is often convenient 
to write
$$ \mathscr{S}(G)= \mathscr{S}_0 (G)\cup \mathscr{S}_1(G), $$
where $\mathscr{S}_1(G)$ are the values coprime to $p$ and  $\mathscr{S}_0(G)$ are the multiples of $p$.
For example, Laquer \cite{Laquer} and Newman \cite{Newman1} studied this question for cyclic $p$-groups, showing that 
$$ \mathscr{S}_1(\mathbb Z_{p^k})=\{m\in \mathbb Z\; :\; p\nmid m\} $$
for $k\geq 1$, as well as
\be \label{newman} \mathscr{S}_0(\mathbb Z_p)=p^2\mathbb Z, \ee
and
$$p^{2k}\mathbb Z\subseteq \mathscr{S}_0(\mathbb Z_{p^k})\subseteq p^{k+1}\mathbb Z $$
for $k\geq 2$, with $\mathscr{S}_0(\mathbb Z_9)=3^3\mathbb Z$ but $\mathscr{S}_0(\mathbb Z_{p^2})\neq  p^3\mathbb Z$ for $p\geq 5$.
From \cite{smallgps}
\be \label{Z3^2}  \mathscr{S}_0(\mathbb Z_3^2)=3^6\mathbb Z,\;\;\; \mathscr{S}_1(\mathbb Z_3^2)=\{m\;:\; m\equiv \pm 1 \bmod 9\}. \ee

For a finite abelian group the group determinant factors into linear factors and coincides with Lind's generalization of the Mahler 
measure to an arbitrary compact abelian group \cite{Lind}.
This is defined  in terms of the characters of the group, with the classical Mahler measure corresponding to the group $\mathbb R/\mathbb Z$, see for example \cite{Norbert,Norbert2}.
The Lind-Lehmer problem for the group then corresponds to finding the smallest non-trivial element of $\mathscr{S}(G)$. Results of this type for $G=\mathbb Z_n$, $\mathbb Z_p^k$ and $\mathbb Z_p \times \mathbb Z_{p^2}$ can be found respectively in \cite{Pigno1}, \cite{dilum} and \cite{pgroups}.

For a finite group $G$ we can use the group determinant  to define a Mahler-type measure on the elements $F=\sum_{g\in G} a_g g$ of the group ring $\mathbb Z [G]$:
$$ m_G\left(F \right):=\frac{1}{|G|}\log M_G(F),\;\; M_G(F):=\mathscr{D}(a_{g_1},\ldots ,a_{g_n}).$$
This extends the concept of Lind measure to  non-abelian finite groups. The elements of $\mathbb Z [G]$  can be associated with elements from a  polynomial ring, with non-commuting monomials in the case of a non-abelian group, and the usual multiplicative 
property $M_G(f_1,f_2)=M_G(f_1)M_G(f_2)$ of the Mahler measure  if we multiply and reduce polynomials using the group relations on the monomials. The Lind-Lehmer problem for the group then  becomes to determine
\be \label{LindLehmer} \lambda(G):= \frac{1}{|G|} \log \min \left\{ |M_G(F)|\geq 2 \; : \; F\in \mathbb Z [G]\right\}. \ee
Alternative approaches can be found in \cite{Lalin,Lind2,luck}.
The case of the dihedral group was considered in \cite{dihedral}, and a complete description of the integer group determinants for all groups of order at most 14 was obtained in \cite{smallgps}.

Recall that for a commutative ring with unity $R$, the \textit{Heisenberg group} over $R$, denoted $H(R)$, is the group of upper triangular $3\times 3$ matrices under matrix multiplication:
$$ H(R)=\left\{ \begin{pmatrix} 1 & a & c \\ 0 & 1 & b \\ 0 & 0 & 1 \end{pmatrix} \; : \; a,b,c \in R\right\}. $$
Here we are concerned with the discrete Heisenberg group $H=H(\mathbb Z)$, an  infinite non-abelian group,
and with $H_p=H(\mathbb Z_p)$ one of the two non-abelian groups of order $p^3$. 
Recall that there are five  groups of order $p^3$: the three abelian groups $\mathbb Z_{p^3}$ (see \cite{Norbert2,Newman2}), $\mathbb Z_p^3$ (see \cite{dilum}) and $\mathbb Z_p \times \mathbb Z_{p^2}$ (see \cite{pgroups}), and the two non-abelian groups $H_p$ and (the yet to be explored)  $\mathbb Z_p \rtimes \mathbb Z_{p^2}$.
We assume throughout that $p\geq 3$; the group  $H_2$ corresponds to the dihedral group $D_8$, and a complete description
of its group determinants was obtained in \cite{dihedral}:
$$ \mathscr{S}_1(D_8)=\{ 4m+1\;:\; m\in \mathbb Z\},\;\; \mathscr{S}_0(D_8)= 2^8\mathbb Z. $$

This paper is organized in the following way.
In Section~\ref{dilum} we characterize the values of $\mathscr{S}_0(\mathbb{Z}_p^2)$, as this is needed in our study of group determinants for the Heisenberg groups $H_p$.
Section~\ref{secHeisenberg} defines the $H_p$ measure of a polynomial and states our main results concerning the sets $\mathscr{S}_0(H_p)$ and  $\mathscr{S}_1(H_p)$.
Proofs of these results appear in Section~\ref{Proofs}.
Finally, Section~\ref{secInfinite} speculates on the Lind-Mahler measure for three infinite non-abelian groups: the  discrete Heisenberg group $H(\mathbb{Z})$, the group of rotations and reflections of the circle $D_\infty$, and  $D_{\infty h}$ when
reflections through the origin are  added to $D_{\infty}$.

\section{The group $G=\mathbb Z_p \times \mathbb Z_p$}\label{dilum}

For $G=\mathbb Z_p^n$ the group determinant $\mathscr{D}_G(a_{g_1},\ldots ,a_{g_{p^n}})$  takes the form
$$ M_G(F)=\prod_{j_1=0}^{p-1} \cdots \prod_{j_n=0}^{p-1} F(w^{j_1},\ldots ,w^{j_n}),\;\; w:=e^{2\pi i/p}, $$
where
$$F(x_1,\ldots ,x_n)=\sum_{g=(m_1,\ldots ,m_n)\in G} a_g\: x_1^{m_1}\cdots x_n^{m_n} \in \mathbb Z [x_1,\ldots ,x_n]. $$
In \cite{dilum} it was shown that
$ \mathscr{S}_1(\mathbb Z_p^n)$ is exactly  the set
\begin{align}\label{defT}
\mathscr{T}_n &:=  \{m\in \mathbb Z\; : \; m\equiv a^{p^{n-1}} \bmod p^n, \text{ for some } 1\leq a <p\} \\\nonumber
&= \{x\in \mathbb Z\; : \; x^{p-1}\equiv 1 \bmod p^n\}.
\end{align}
 For example, we get
$$ \mathscr{S}_1(\mathbb Z_3^n)=\{m\; : \; m\equiv \pm 1 \bmod 3^n\}. $$
Writing $u:=1-w$ we have $|u|_p=p^{-1/(p-1)}$ and $F(w^{j_1},\ldots ,w^{j_n})\equiv F(1,\ldots ,1)$ mod $u$, and $p\mid M_G(F)$
if and only if $p\mid F(1,\ldots ,1)$, with $u$ dividing all  the other terms. We immediately obtain the rough bound
\be \label{rough}  p\mid M_G(F)\;\; \Rightarrow \;\;p^{1+(p^n-1)/(p-1)}  \mid M_G(F). \ee
This was enough in \cite{dilum}, where the primary concern was the smallest non-trivial measure (the values divisible by $p$ are clearly larger than the trivial bound  $|G|-1$), and from \eqref{newman} is plainly sharp when $n=1$.
It can be improved though when $n\geq 2$.
To obtain the optimal $p$ divisibility result for $G=H_p$ we need the optimal result for $G=\mathbb Z_p^2$, which we establish in the following theorem.

\begin{theorem} \label{Zp} If $G=\mathbb Z_p^2$ and $p\mid M_{G}(F)$ then $p^{p+3}\mid M_{G}(F)$.
Moreover, for any $k\geq 0$ there exists a polynomial $F\in\mathbb Z[x,y]$ with $p^{p+3+k}\parallel M_{G}(F).$ 
\end{theorem}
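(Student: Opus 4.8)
The plan is to pass to the $p$-adic factorization of the cyclotomic factors. Work in $\mathbb{Z}[w]$ and set $u = 1-w$, so that $v_p(u) = 1/(p-1)$ and $p$ equals $u^{p-1}$ times a unit, where $v_p$ is normalized by $v_p(p)=1$. Since $(1-u)^p = w^p = 1$, each factor of $M_G(F) = \prod_{j_1,j_2} F(w^{j_1},w^{j_2})$ expands as $F(w^{j_1},w^{j_2}) = \sum_{t\ge 0} c_t u^t$ with integer coefficients, where $c_0 = F(1,1) = \sum a_{m_1,m_2}$ and $c_1 = -(j_1 A + j_2 B)$ with $A = \sum a_{m_1,m_2}m_1$ and $B = \sum a_{m_1,m_2}m_2$. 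The hypothesis $p \mid M_G(F)$ is equivalent to $p \mid F(1,1)$; then $v_p(F(1,1)) \ge 1$, and since every $c_t u^t$ has valuation at least $t/(p-1)$, each of the $p^2-1$ factors with $(j_1,j_2)\ne(0,0)$ contributes at least $1/(p-1)$. This already reproduces the rough bound \eqref{rough}, namely $v_p(M_G(F)) \ge 1 + (p^2-1)/(p-1) = p+2$.

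For the sharp lower bound I would refine this on the vanishing line. If $j_1 A + j_2 B \equiv 0 \pmod p$ then $c_1 \equiv 0$, so the $u^1$-term has valuation at least $1 + 1/(p-1)$; combined with $v_p(c_0)\ge 1$ and the fact that the $u^2$-term has valuation at least $2/(p-1)\le 1$, this forces $v_p(F(w^{j_1},w^{j_2})) \ge 2/(p-1)$ on the whole line. A short count over $\mathbb{F}_p^2$ then finishes part one. If $(A,B)\not\equiv(0,0)$, the equation $j_1 A + j_2 B \equiv 0$ defines a line with exactly $p-1$ nonzero points, whence
\[ v_p(M_G(F)) \ge 1 + (p-1)\cdot\tfrac{2}{p-1} + p(p-1)\cdot\tfrac{1}{p-1} = p+3, \]
and if $(A,B)\equiv(0,0)$ all $p^2-1$ nonzero factors contribute at least $2/(p-1)$, giving the even larger bound $2p+3$. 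As $M_G(F)\in\mathbb{Z}$, its valuation is an integer, so $p^{p+3}\mid M_G(F)$.

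For the sharpness statement I would produce an explicit family realizing every valuation $p+3+k$, for instance $F(x,y) = x + 2y^2 - 4y + (p^{1+k}+1)$. This is arranged so that $F(1,1) = p^{1+k}$, $A=1\not\equiv 0$, and $B=0$, placing us in the first case with vanishing line $\{j_1=0\}$. One then evaluates the three kinds of factors: the factor at $(0,0)$ has valuation exactly $1+k$; the $p(p-1)$ off-line factors have $c_1 = -j_1\not\equiv 0$, so their linear term dominates and each has valuation exactly $1/(p-1)$; and the $p-1$ on-line factors equal $g(w^{j_2})$ for $g(y)=F(1,y)$, whose $u^2$-coefficient is the unit $2j_2^2$, so (as discussed below) each has valuation exactly $2/(p-1)$. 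Summing gives $v_p(M_G(F)) = (1+k) + p + 2 = p+3+k$.

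The step I expect to be most delicate is checking that the on-line factors really have valuation exactly $2/(p-1)$, uniformly in $p$, and this is exactly why the leading coefficient is taken to be $2$ rather than $1$. For $p\ge 5$ one has $2/(p-1)<1$, so the $u^2$-term is a strict minimum and there is nothing to worry about. For $p=3$, however, $2/(p-1)=1$ and the accidental identity $u^2=-pw$ makes the $u^2$-term and the constant term comparable, so they can partially cancel; indeed the naive candidate $x+y^2-2y+(p^{1+k}+1)$ collapses to valuation $p+4$ when $p=3$ and $k=0$. The coefficient $2$ is chosen precisely so that, for $p=3$, the on-line factor equals $p$ times a $p$-adic unit, the relevant residue $1-2j_2^2$ being $\not\equiv 0 \bmod 3$; this removes the cancellation and makes the family valid for all $p\ge 3$ and all $k\ge 0$.
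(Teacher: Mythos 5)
Your proof is correct and follows essentially the same route as the paper: both arguments expand $F(w^{j_1},w^{j_2})$ in powers of $u=1-w$, use $p\mid F(1,1)$ to get one factor of $u$ from every nontrivial character, locate the extra $u$-divisibility on the line $j_1A+j_2B\equiv 0 \bmod p$, and realize sharpness with the same family $A_1p^{1+k}+A_2(1-x)+A_3(1-y)^2$ (your $F$ is the case $A_1=1$, $A_2=-1$, $A_3=2$). The one genuine (small) improvement is your choice $A_3=2$, which makes the sharpness example valid at $p=3$ as well, whereas the paper's verification of the example assumes $p\ge 5$ and handles $p=3$ by citing the known value of $\mathscr{S}_0(\mathbb Z_3^2)$ from \eqref{Z3^2}.
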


\begin{proof} We  expand $F$ and write
$$ F(x,y)=A+B(1-x)+C(1-y) +\sum_{i+j\geq 2} A_{ij}(1-x)^i(1-y)^j. $$
Plainly  $F(\omega^i,\omega^j)\equiv A$ mod $u$ and as above $p\mid M_G(F)$ forces $p\mid A=F(1,1),$ $F(\omega^i,\omega^j)\equiv 0$ mod $u$ and $p^{1+(p^2-1)/(p-1)}\mid M_G(F)$. As $M_G(F)$ is an integer, to improve the exponent from $p+2$ to $p+3$ it will be enough to show that $F(\omega^i,\omega^j)\equiv 0$ mod $u^2$ for
at least one of the $(i,j)\neq (0,0)$. From $\omega^i=(1-u)^i\equiv 1-iu$ mod $u^2,$ 
$$ F(\omega^i,\omega^j) \equiv A+B(1-\omega^i)+C(1-\omega^j) \equiv (Bi+Cj)(1-\omega) \text{ mod $u^2$}. $$
Hence if $p\mid B$ we can take $j=0$ and any $i=1,\ldots ,p-1$, and if $p\nmid B$ we can take $i\equiv -CB^{-1} j$ mod $p$ 
for any $j=1,\ldots ,p-1$ and $F(\omega^i,\omega^j)\equiv 0$ mod $u^2.$

For $F(x,y)=A_1 p^{1+k} + A_2(1-x)+A_3(1-y)^2,$ $p\nmid A_1A_2A_3,$ we have $p^{1+k}\parallel F(1,1)$,
$u\parallel F(\omega^i,\omega^j), 1\leq i <p,0\leq  j<p,$ and if $p\geq 5$, $u^2\parallel F(1,\omega^j), 1\leq j<p,$  giving $p^{1+k+p+2}\parallel M_G(F)$. For $p=3$ we know that we can achieve all $3^{6+k}$ by \eqref{Z3^2}.
\end{proof}

\section{The Heisenberg groups}\label{secHeisenberg}

Writing
$$ X=\begin{pmatrix} 1 & 0 & 0 \\ 0 & 1 & 1 \\ 0 & 0 & 1 \end{pmatrix},\;\;\;  Y=\begin{pmatrix} 1 & 1 & 0 \\ 0 & 1 & 0 \\ 0 & 0 & 1 \end{pmatrix}, \;\;\;Z=\begin{pmatrix} 1 & 0 & 1 \\ 0 & 1 & 0 \\ 0 & 0 & 1 \end{pmatrix}, $$
we have 
$$    X^aY^bZ^c=\begin{pmatrix} 1 & b & c \\ 0 & 1 & a \\ 0 & 0 & 1 \end{pmatrix}, \;\;XZ=ZX,\;\; YZ=ZY,\;\; YX=XYZ.$$
Hence we can write
$$ H_p=\{ x^ay^bz^c\; : \; 0\leq a,b,c<p,\;\; x^p=y^p=z^p=1,\; xz=zx,\;yz=zy,\; yx=xyz\}, $$
and 
$$ H=\{ x^ay^bz^c\; : \; a,b,c\in \mathbb Z,\;\; xz=zx,\;yz=zy,\; yx=xyz\}. $$

\subsection{The $H_p$ measure of a polynomial}

For $G=H_p$ we can think of the elements in $\mathbb Z [G]$ as polynomials
\be \label{H-poly}  F(x,y,z)=\sum_{0\leq i,j,k <p} a_{ijk} x^iy^jz^k, \;\; a_{ijk}\in \mathbb Z, \ee
in a ring with non-commutative variables satisfying the group relations  $x^p=y^p=z^p=1$, $xz=zx$, $yz=zy,$ $yx=xyz$.
It will be convenient to write \eqref{H-poly} in the form
$$ F(x,y,z)=\sum_{i=0}^{p-1} x^i \: f_i(y,z),\;\;\; f_i(y,z)= \sum_{0\leq j,k <p} a_{ijk} y^jz^k. $$
From Frobenius \cite{Frobenius} (see \cite{Conrad,Formanek}), we have a factorization of the group determinant
$$\mathscr{D}(a_{g_1},\ldots ,a_{g_k})=\prod_{\rho \in \hat{G}} \det  \rho\left(\sum_{g\in G} a_g g\right)^{\deg (\rho)}, $$
where $\hat{G}$ is the set of irreducible representations for $G$. 

For $G=H_p$ we have $p^2$  degree~1 characters, namely,
$$ \chi_{ij} (x)=w^i,\;\; \chi_{ij} (y)=w^j,\;\; \chi_{ij}(z)=1, \;\;\; \chi_{ij}(F)=F(w^i,w^j,1), $$
for $0\leq i,j<p$, and $(p-1)$ degree $p$ representations $\phi_{\lambda}$, $\lambda=w^j$, $1\leq j<p$, with 
$$ \phi_{\lambda} (x)=\begin{pmatrix} 0 & 0 & \cdots & 0 & 1 \\
  1 & 0 & \cdots & 0 & 0 \\
 \vdots  &  \vdots & \ddots & \vdots & \vdots \\
 0 & 0 & \cdots & 1 & 0 \end{pmatrix},\;\;\;   \phi_{\lambda} (y)=\begin{pmatrix} 1 & 0 & \cdots & 0 & 0  \\
  0 & \lambda  & \cdots & 0 & 0 \\
 \vdots  &  \vdots & \ddots & \vdots & \vdots \\
 0 & 0 & \cdots & 0 & \lambda^{p-1} \end{pmatrix},\;\;\; \phi_{\lambda}(z)=\lambda I_p. $$
It is readily checked that these do satisfy the group relations and give the desired representations.
That is,  $\phi_{\lambda} (x)$ shifts rows down by one cyclically, $\phi_{\lambda}(y)$ multiplies the successive rows 
by $1,\lambda,\ldots, \lambda^{p-1}$, and $\phi_{\lambda}(z)$ multiplies everything by $\lambda$, giving
$$ \phi_{\lambda}(F) = \begin{pmatrix} f_0(1,\lambda) & f_{p-1}(1,\lambda) & f_{p-2} (1,\lambda) & \cdots & f_1(1,\lambda) \\
  f_1(\lambda ,\lambda) & f_0(\lambda,\lambda) & f_{p-1}(\lambda ,\lambda) & \cdots & f_2(\lambda,\lambda )\\
   f_2(\lambda^2,\lambda) & f_1(\lambda^2,\lambda) & f_0(\lambda^2,\lambda) & \cdots & f_3(\lambda^2,\lambda) \\
\vdots & \vdots & \vdots & \ddots & \vdots \\
 f_{p-1}(\lambda^{p-1},\lambda) & f_{p-2}(\lambda^{p-1},\lambda) & f_{p-3}(\lambda^{p-1},\lambda) & \cdots & f_0(\lambda^{p-1},\lambda) \end{pmatrix}. $$
Thus, the $j$th row will consist of $f_{p-1}(y,\lambda),f_{p-2}(y,\lambda), \ldots , f_0(y,\lambda)$ shifted cyclically $j$ places to the right and evaluated at $y=w^{j-1}$, $j=1,\ldots ,p$. 
Hence we have group determinant
$$ M_{H_p} (F) = M_1 M_2^p  $$
where
$$ M_1=\prod_{0\leq i,j<p} \chi_{ij}(F) = \prod_{0\leq i,j <p} F(w^i,w^j,1)=M_{\mathbb Z_p^2}(F(x,y,1)) $$
and
$$ M_2=\prod_{j=1}^{p-1} D(w^j), \;\; D(\lambda):=\det(\phi_{\lambda}(F)). $$
Note that $M_1$ and $M_2$ are integers.

In the case $F(x,y,z)=f_0(y,z)$, we have 
$$ D(\lambda)=\prod_{j=0}^{p-1} f_0(w^j,\lambda), \;\; M_1=\prod_{j=0}^{p-1} f_0(w^j,1)^p,\;\; M_{H_p}(F) = M_{\mathbb Z_p^2}(f_0)^p. $$
Similarly, if $F(x,y,z)=G(x,z)=\sum_{i=0}^{p-1} x^i f_i(z)$ then the $D(\lambda)$ are circulant determinants and
$$ D(\lambda)= \prod_{i=0}^{p-1} G(w^i,\lambda), \;\; M_1=\prod_{i=0}^{p-1} G(w^i,1)^p,\;\; M_{H_p}(F)=M_{\mathbb Z_p^2}(G)^p. $$
This is as we should expect since  $\langle Y,Z\rangle$ and  $\langle X,Z\rangle$ are abelian subgroups of index $p$ in $H_p$, both of 
the form $\mathbb Z_p^2$.

For a polynomial that is binomial in $x$,
$$ F(x,y,z) = f_0(y,z)+x^kf_k(y,z), \;\;\; 1\leq k<p, $$
it is readily seen that 
\be \label{binomial}  D(\lambda) = \prod_{j=0}^{p-1} f_0(w^j,\lambda) +  \prod_{j=0}^{p-1} f_k(w^j,\lambda) \ee
and
\be \label{binomial2}   M_1=   \prod_{j=0}^{p-1} \left(   f_0(w^j,1)^p +  f_k(w^j,1)^p  \right). \ee

We show that the determinant values  for  $H_p$ satisfy the following congruence.

\begin{theorem}\label{main}  For $G=H_p$ the Heisenberg measure of a polynomial  $F(x,y,z)$ of the form \eqref{H-poly} satisfies
\be \label{cong}  M_G(F)\equiv F(1,1,1)^{p^3} \bmod p^3. \ee
\end{theorem}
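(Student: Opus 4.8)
The plan is to work with the factorization $M_G(F)=M_1M_2^p$ established above, where $M_1=M_{\mathbb Z_p^2}(F(x,y,1))=\prod_{0\le i,j<p}F(w^i,w^j,1)$ is the contribution of the degree-$1$ characters (which factor through the abelianization $H_p^{\mathrm{ab}}\cong\mathbb Z_p^2$, with $z\mapsto1$) and $M_2=\prod_{j=1}^{p-1}D(w^j)$ is the contribution of the degree-$p$ representations. Write $N:=F(1,1,1)$ and $u:=1-w$. Since $(u)^{p-1}=(p)$ in $\mathbb Z[w]$, one has $(u^{3(p-1)})\cap\mathbb Z=(p^3)$, so \eqref{cong} is equivalent to the single statement $M_G(F)\equiv N^{p^3}\bmod u^{3(p-1)}$ in $\mathbb Z[w]$. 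I would next recast both pieces as norms. The Galois group $\mathrm{Gal}(\mathbb Q(w)/\mathbb Q)\cong(\mathbb Z/p)^\times$ acts on the factors of $M_1$ by $(i,j)\mapsto(ti,tj)$, grouping the $p^2-1$ nonidentity factors into $p+1$ free orbits, so that $M_1=N\prod_{\mathrm{orbits}}\Norm_{\mathbb Q(w)/\mathbb Q}\!\big(F(w^{i},w^{j},1)\big)$; and since $\sigma_t(D(w))=D(w^t)$ (the matrix $\phi_w(F)$ is Galois-equivariant in the sense that $\sigma_t(\phi_w(F))=\phi_{w^t}(F)$ entrywise), one has $M_2=\Norm_{\mathbb Q(w)/\mathbb Q}(D(w))$. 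Thus the whole problem reduces to congruences for norms of cyclotomic integers that are $\equiv N$, respectively $\equiv N^p$, modulo $u$.

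I would then split on whether $p\mid N$. If $p\mid N$, the claim is the divisibility $p^3\mid M_G(F)$, and this follows from a valuation count already implicit in Theorem~\ref{Zp}: each of the $p^2-1$ nonidentity factors of $M_1$ is $\equiv N\equiv0\bmod u$, while the identity factor equals $N$ with $v_u(N)=(p-1)v_p(N)\ge p-1$, so $v_u(M_1)\ge(p-1)+(p^2-1)=(p-1)(p+2)$ and hence $v_p(M_1)\ge p+2\ge3$. So suppose $p\nmid N$; now every factor is a $u$-adic unit and the assertion is a congruence of units. The engine here is the elementary lifting lemma for rational integers, $\alpha\equiv\beta\bmod p^k\Rightarrow\alpha^p\equiv\beta^p\bmod p^{k+1}$ for $k\ge1$: because $M_2^p$ is a genuine $p$-th power, it suffices to control $M_2\bmod p^2$ (which then gives $M_2^p\bmod p^3$), whereas $M_1$ must be controlled to mod $p^3$ directly. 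I would obtain both inputs by expanding each representation value in powers of $u$ about the point $(1,1,1)$, using $w^i\equiv1-iu\bmod u^2$, and evaluating the elementary-symmetric (trace) terms of the norm, bounding the $p$-adic size of the $k$th symmetric function via $v_p\ge v_u/(p-1)\ge k/(p-1)$.

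The hard part, and the reason the proof cannot be done factor by factor, is that the two pieces do \emph{not} individually satisfy the clean congruence to full precision. Indeed, for $F=a+bx$ one computes $M_1=(a^p+b^p)^p$, and $(a^p+b^p)^p\not\equiv N^{p^2}=(a+b)^{p^2}\bmod p^3$ in general (for $p=3$, $a=b=1$ this reads $8\not\equiv512\bmod27$); it is only the full product $M_G(F)=(a^p+b^p)^{p^2}$ that collapses to $N^{p^3}\bmod p^3$, via two applications of the lifting lemma. Consequently the crux is to show that the first-order ($p$-level) defect of $M_1$ away from $N^{p^2}$ is cancelled exactly by the defect of $M_2^p$ away from $N^{p^2(p-1)}$. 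Making this cancellation explicit --- that is, matching the trace terms produced by the $p+1$ norms $\Norm_{\mathbb Q(w)/\mathbb Q}(F(w^i,w^j,1))$ against those in $\Norm_{\mathbb Q(w)/\mathbb Q}(D(w))^p$ --- is the delicate computation, and it is precisely here that the fine structure of $D(\lambda)=\det\phi_\lambda(F)$, rather than merely its residue modulo $u$, must be brought to bear.
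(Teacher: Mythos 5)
Your setup is sound---the factorization $M_G(F)=M_1M_2^p$, the recasting of $M_1$ and $M_2$ as norms from $\mathbb Q(w)$, the valuation count disposing of the case $p\mid F(1,1,1)$, and the observation that the lifting lemma lets you settle for $M_2$ modulo $p^2$ while $M_1$ must be handled modulo $p^3$ all agree with the paper, and your example $F=a+bx$ correctly shows that neither factor satisfies the congruence on its own. But the argument stops exactly where it needs to begin: you name the cancellation between the defect of $M_1$ and the defect of $M_2^p$ as ``the delicate computation'' and then do not perform it. That cancellation is the entire content of the theorem, and the proposal offers no mechanism that would produce it; ``matching the trace terms'' of the $p+1$ norms composing $M_1$ against those of $\Norm_{\mathbb Q(w)/\mathbb Q}(D(w))^p$ is not something one can do term by term without a structural reason for the match.

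The paper's mechanism is to exhibit a single integer controlling both factors, namely $c_0=\frac1p\sum_{y^p=1}\prod_{x^p=1}F(x,y,1)$, the constant coefficient of the circulant $\prod_{x^p=1}F(x,y,1)$ reduced modulo $y^p-1$. One proves $M_2\equiv c_0^{p-1}\bmod p^2$ (hence $M_2^p\equiv c_0^{p(p-1)}\bmod p^3$) and $M_1\equiv c_0^p\bmod p^3$, so that $M_G(F)\equiv c_0^{p^2}\equiv F(1,1,1)^{p^3}\bmod p^3$. The first congruence requires expanding $\det\phi_\lambda(F)$ and grouping its off-diagonal terms into orbits of size $p$ under simultaneous cyclic row and column shifts, combined with the nontrivial congruence $\frac1p\sum_{y^p=1}f(y)^p\equiv\prod_{y^p=1}f(y)\bmod p^2$ of Lemma~\ref{lemma1} (proved via Newton--Girard); the second requires a separate $p^3$-lifting statement for elementary symmetric functions of $p$-th powers, Lemma~\ref{lemma2}. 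None of these ingredients is present or anticipated in your proposal, so as it stands the argument has a genuine gap at its central step.
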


Notice that reducing a general polynomial in $\mathbb Z [x,y,z]$  to the standard form \eqref{H-poly} using the Heisenberg group relations does not change the value of $F(1,1,1)$.
Theorem~\ref{main} is the same congruence satisfied when $G=\mathbb Z_p^3$, see \cite{dilum}.
Indeed, from this we deduce that the values coprime to $p$ are the same in both cases.

\begin{corollary}\label{notp}
In the notation of \eqref{defT},
$$\mathscr{S}_1(H_p)=\mathscr{T}_3=\{ x\in \mathbb Z \; : \; x^{p-1}\equiv 1 \bmod p^3\}. $$
\end{corollary}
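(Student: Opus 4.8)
The plan is to prove the two inclusions of $\mathscr S_1(H_p)=\mathscr T_3$ separately, with Theorem~\ref{main} doing the work for $\mathscr S_1(H_p)\subseteq\mathscr T_3$. Suppose $m=M_G(F)$ with $p\nmid m$. By \eqref{cong}, $m\equiv F(1,1,1)^{p^3}\bmod p^3$, and since $m$ is a unit modulo $p$ so is $a:=F(1,1,1)$. I would work in the cyclic group $(\mathbb Z/p^3\mathbb Z)^\times$ of order $p^2(p-1)$: the element $a^{p^2}$ is a $(p-1)$-st root of unity, since its $(p-1)$-st power is $a^{p^2(p-1)}\equiv1$, and because $p\equiv1\bmod(p-1)$ the $p$-th power map fixes every such root, so $a^{p^3}=(a^{p^2})^{p}=a^{p^2}$. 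Hence $m\equiv a^{p^2}\bmod p^3$, giving $m^{p-1}\equiv1\bmod p^3$, which is exactly the membership condition for $\mathscr T_3$ in \eqref{defT}.

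For the reverse inclusion $\mathscr T_3\subseteq\mathscr S_1(H_p)$ I would first realize the residues and then sweep each class. The constant polynomial $F=a$ has $M_G(a)=a^{p^3}$, and as $a$ runs over $1,\dots,p-1$ these values meet every $(p-1)$-st root-of-unity class modulo $p^3$, so each class of $\mathscr T_3$ is represented. It then remains to realize, within each class, the full progression $\{a^{p^3}+p^3t:t\in\mathbb Z\}$; Theorem~\ref{main} guarantees the step cannot be finer than $p^3$, so the goal is a family $F_{a,t}$ with $F_{a,t}(1,1,1)=a$ and $M_G(F_{a,t})=a^{p^3}+p^3t$ affine in the free parameter $t$.

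The main obstacle is constructing such an affine family, since the most natural perturbations collapse to thin sets of $p$-th or $p^2$-th powers. Restricting $F$ to the abelian subgroups $\langle X,Z\rangle$ or $\langle Y,Z\rangle$ yields $M_G(F)=M_{\mathbb Z_p^2}(\,\cdot\,)^{p}$, a perfect $p$-th power; a central perturbation $F=1+p^2s(z-1)$ gives $M_G(F)=\big(\prod_{j=1}^{p-1}(1+p^2s(w^j-1))\big)^{p^2}\equiv1\bmod p^5$; and a single monomial $1+cx$ gives $D(\lambda)=1+c^{p}$ for every $\lambda$. In each case the free parameter is amplified by a product or a power. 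To defeat this I would seek a perturbation, using both generators $x$ and $y$ rather than the center, whose contribution to a single factor $D(\lambda)$ or to $M_1=M_{\mathbb Z_p^2}(F(x,y,1))$ is of rank one and hence enters the determinant linearly, mirroring the flexible constructions used for $\mathbb Z_p^2$ and $\mathbb Z_p^3$ in \cite{dilum} and in the proof of Theorem~\ref{Zp}. Carried out uniformly over the base constant $a$, this sweeps every class of $\mathscr T_3$ simultaneously and completes the containment.
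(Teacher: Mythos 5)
Your first inclusion, $\mathscr{S}_1(H_p)\subseteq\mathscr{T}_3$, is correct and is exactly how the paper deduces it from Theorem~\ref{main} (the paper makes the same observation that $a^{p^3}\equiv a^{p^2}\bmod p^3$). The gap is in the reverse inclusion, which is the real content of the corollary and which the paper isolates as Theorem~\ref{achieve}: you describe the obstacle accurately but stop at ``I would seek a perturbation whose contribution enters the determinant linearly'' without producing one. Moreover, your proposed starting point does not set up the sweep correctly. The natural rank-one perturbation of the constant polynomial, $F=a+m\Phi_p(x)\Phi_p(y)\Phi_p(z)$, changes only the trivial-character factor and gives
\[
M_G(F)=a^{p^2-1}(a+mp^3)\cdot a^{p^2(p-1)}=a^{p^3}+a^{p^3-1}p^3m,
\]
an arithmetic progression of common difference $a^{p^3-1}p^3$ rather than $p^3$, so for $a\geq 2$ it misses most of the residue class $a^{p^2}+p^3\mathbb Z$. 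The crux your sketch leaves open is to arrange that \emph{every} factor of $M_1M_2^p$ other than the trivial-character one is a unit and that these units multiply to exactly $1$ in $\mathbb Z$, so that the parameter $m$ really does advance the value in steps of $p^3$.

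The paper achieves this with
\[
F(x,y,z)=(1+z+\cdots+z^{a-1})+g(y)\Phi_p(z)+h(x)\Phi_p(y)\Phi_p(z)+m\Phi_p(x)\Phi_p(y)\Phi_p(z),
\]
where $g,h$ are defined by $(1+y+\cdots+y^{a-1})^p=a+pg(y)$ and $(a+pg(y))^p=a^p+p^2h(y)$ modulo $y^p-1$. These corrections force $F(x,w^j,1)=(1+w^j+\cdots+w^{j(a-1)})^p$ for $j\neq 0$, $F(w^i,1,1)=(1+w^i+\cdots+w^{i(a-1)})^{p^2}$ for $i\neq 0$, and $\phi_\lambda(F)=(1+\lambda+\cdots+\lambda^{a-1})I_p$, so every nontrivial factor is a power of the cyclotomic unit $(w^{ja}-1)/(w^j-1)$, and the product of these over $j=1,\dots,p-1$ equals $1$ because $j\mapsto ja$ permutes the nonzero residues; only the trivial character sees $m$, giving $M_{H_p}(F)=a^{p^2}+mp^3$ exactly. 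Note that the successful construction leans on the center ($z$) and on the group-sum element, contrary to your instinct to avoid them; your rank-one heuristic correctly explains why $m$ can enter linearly, but without the $g$ and $h$ corrections the surrounding unit factors do not cancel and the containment $\mathscr{T}_3\subseteq\mathscr{S}_1(H_p)$ is not established.
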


Theorem~\ref{main} tells us that $\mathscr{S}_1(H_p)\subseteq \mathscr{T}_3$, so  Corollary~\ref{notp} follows at  once from showing that we can achieve every value in $\mathscr{T}_3$:

\begin{theorem} \label{achieve} For any $a$ in $\mathbb N$ with $p\nmid a$ and $m$ in $\mathbb Z$  there is an $F(x,y,z)$ of the form \eqref{H-poly} with $M_{H_p}(F)=a^{p^2}+mp^3.$
\end{theorem}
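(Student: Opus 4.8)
The plan is to exploit the factorization $M_{H_p}(F)=M_1M_2^p$, in which the abelian factor $M_1=M_{\mathbb Z_p^2}(F(x,y,1))$ is completely understood through \eqref{defT}, while the non-abelian factor $M_2=\prod_{j=1}^{p-1}D(w^j)$ carries the genuinely Heisenberg information. First I would check that the target is consistent with Theorem~\ref{main}: since $p\nmid a$, Euler's theorem gives $a^{p^2(p-1)}\equiv1\bmod p^3$, hence $a^{p^3}\equiv a^{p^2}\bmod p^3$, so every value $a^{p^2}+mp^3$ already lies in the class forced by \eqref{cong}. Moreover $a^{p^2}+mp^3\equiv a^{p^2}\bmod p^2$ and $(a^{p^2})^{p-1}\equiv1\bmod p^2$, so $a^{p^2}+mp^3\in\mathscr T_2=\mathscr S_1(\mathbb Z_p^2)$; thus for each $m$ there is an $H(x,y)$ with $M_{\mathbb Z_p^2}(H)=\pm(a^{p^2}+mp^3)$ (note $\mathscr T_2=-\mathscr T_2$, as $p-1$ is even). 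The strategy is therefore to realize such an $H$ as the reduction $F(x,y,1)$ of a suitable $F$, while forcing $M_2=\pm1$, so that $M_{H_p}(F)=M_1M_2^p=\pm M_1=a^{p^2}+mp^3$.

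A clean device for moving along the target progression is the group--ring norm $N=\sum_{0\le i,j,k<p}x^iy^jz^k$. Since $\sum_a w^{ia}=0$ for $p\nmid i$, every nontrivial character satisfies $\chi_{ij}(N)=0$, while $\phi_\lambda(N)=0$ for $\lambda=w^j\neq1$ (the $y$--sum $\sum_m\lambda^{jm}$ vanishes) and $N(1,1,1)=p^3$. Hence adding $tN$ leaves every factor of $M_{H_p}$ unchanged except the trivial one, giving the exact identity $M_{H_p}(F+tN)=M_{H_p}(F)+tp^3\,M_{H_p}(F)/F(1,1,1)$, an arithmetic progression in $t$. Starting from a base with $M_{H_p}(F_0)=a^{p^2}$ --- for instance $F_0=g(z)$ with $g\in\mathbb Z[z]$ and $M_{\mathbb Z_p}(g)=a$, for which $M_{H_p}(g(z))=M_{\mathbb Z_p}(g)^{p^2}=a^{p^2}$, or via the subgroup $\langle X,Z\rangle$ using $a^p\in\mathscr T_2$ --- this sweeps out $a^{p^2}+p^3Q_0\mathbb Z$ with $Q_0=M_{H_p}(F_0)/F_0(1,1,1)$.

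To pin $M_2$ to a unit I would use the binomial--in--$x$ polynomials of \eqref{binomial}--\eqref{binomial2}. Taking $f_k\equiv1$ in $F=f_0(y,z)+x^kf_k(y,z)$, formula \eqref{binomial} gives $D(w^j)=\prod_{i=0}^{p-1}f_0(w^i,w^j)+1$, so it suffices to arrange that for each $j=1,\dots,p-1$ the first product vanishes, i.e. $f_0(w^{i_j},w^j)=0$ for one index $i_j$; then every $D(w^j)=1$ and $M_2=1$. Meanwhile \eqref{binomial2} gives $M_1=\prod_{j=0}^{p-1}\bigl(f_0(w^j,1)^p+1\bigr)=M_{\mathbb Z_p}\!\bigl(f_0(y,1)^p+1\bigr)$, which depends only on the data $f_0(y,1)$. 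Writing $f_0(y,z)=P(y)+(z-1)Q(y,z)$ decouples the two demands in principle: $f_0(y,1)=P(y)$ controls $M_1$, while $Q$ is free to enforce the vanishing conditions controlling $M_2$.

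The main obstacle is exactly this decoupling, where the arithmetic of $\mathbb Z[w]$ intervenes. Forcing $f_0(1,w^j)=0$ for all $j$ requires $Q(1,w^j)=P(1)/(1-w^j)$, and since $1-w$ generates the prime above $p$, integrality of $Q$ forces $p\mid P(1)=f_0(1,1)$; so $M_1$ and $M_2$ cannot be chosen entirely independently, and the genuinely non-abelian degree-$p$ determinants constrain the abelian factor. The heart of the argument is to show that, under this divisibility constraint, $M_1$ still ranges over the full progression $a^{p^2}+p^3\mathbb Z$ --- varying $P$ on the nontrivial characters $P(w^j)$, $j\neq0$, and using the norm identity of the second paragraph to correct the residue modulo $Q_0$. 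Verifying that these mechanisms together reach every $m$, rather than only a sublattice, is the step I expect to require the most care.
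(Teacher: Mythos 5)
Your framework is the right one --- split $M_{H_p}(F)=M_1M_2^p$, force $M_2$ to be a unit, and use the norm element $\Phi_p(x)\Phi_p(y)\Phi_p(z)$ to move along an arithmetic progression --- and your consistency checks against Theorem~\ref{main} and $\mathscr T_2$ are correct. But the argument is not complete, and you say so yourself: each of the two mechanisms you propose stalls at an obstruction you identify but do not overcome. Starting from $F_0=g(z)$ with $M_{\mathbb Z_p}(g)=a$, adding $t\,\Phi_p(x)\Phi_p(y)\Phi_p(z)$ only sweeps out $a^{p^2}+tp^3a^{p^2}/g(1)$, and the cofactor $a^{p^2}/g(1)$ cannot be made $\pm1$: that would require $g(1)=\pm a^{p^2}$ together with $g(1)\prod_{j\neq 0}g(w^j)=a$, forcing $\prod_{j\neq0}g(w^j)=\pm a^{1-p^2}$, which is a non-integral rational that must also be a rational integer --- impossible for $a>1$. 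In the binomial-in-$x$ construction the integrality constraint $p\mid f_0(1,1)$ genuinely couples $M_1$ to $M_2$, as you note. And knowing that $\pm(a^{p^2}+mp^3)$ lies in $\mathscr T_2$, hence is some $\mathbb Z_p^2$ determinant, does not produce a lift $F$ with $F(x,y,1)$ realizing that value \emph{and} $M_2=\pm1$; that lifting step is exactly what is missing.

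The idea that closes the gap is an explicit construction by iterated $p$-th powering. Define $g,h\in\mathbb Z[y]$ by $(1+y+\cdots+y^{a-1})^p\equiv a+pg(y)$ and $(a+pg(y))^p\equiv a^p+p^2h(y)$ mod $y^p-1$, and take
$$F=(1+z+\cdots+z^{a-1})+g(y)\Phi_p(z)+h(x)\Phi_p(y)\Phi_p(z)+m\Phi_p(x)\Phi_p(y)\Phi_p(z).$$
Each $\Phi_p$ correction vanishes at every representation where its variable is evaluated at a nontrivial $p$-th root of unity, so every nontrivial factor of the group determinant is a power of the cyclotomic unit $(w^{ja}-1)/(w^j-1)$; since $\gcd(a,p)=1$ these multiply to exactly $1$ over $j=1,\dots,p-1$, giving $M_2=1$, while at $y=1$ the corrections telescope to $F(x,1,1)=(1+x+\cdots+x^{a-1})^{p^2}+mp^2\Phi_p(x)$, whence $M_1=a^{p^2}+mp^3$ on the nose. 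This is the decoupling device your sketch is looking for: rather than prescribing $M_1$ abstractly and then trying to control $M_2$, one builds $F$ so that all factors except the trivial one are literally units with total product $1$.
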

Of course $a^{p^3}\equiv a^{p^2}$ mod $p^3$. 
For the multiples of $p$ we have:

\begin{theorem} \label{pdiv}  If $p\mid M_{H_p}(F)$ then $p^{p^2+3}\mid M_{H_p}(F)$. 
In addition, there exists a polynomial $F$ with $p^{p^2+3}\parallel M_{H_p}(F). $

\end{theorem}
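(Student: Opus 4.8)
The plan is to exploit the factorization $M_{H_p}(F)=M_1M_2^p$ and bound the two factors separately, with sharp exponents that add up to exactly $p^2+3$. First, by Theorem~\ref{main} the hypothesis $p\mid M_{H_p}(F)$ forces $p\mid F(1,1,1)$. Since $F(1,1,1)=F(w^0,w^0,1)$ is one of the factors of $M_1=M_{\mathbb Z_p^2}(F(x,y,1))$, we get $p\mid M_1$, and Theorem~\ref{Zp} immediately upgrades this to $p^{p+3}\mid M_1$. It therefore remains to prove that $p^{p-1}\mid M_2$; granting this, writing $v_p$ for the $p$-adic valuation we obtain $v_p(M_{H_p}(F))=v_p(M_1)+p\,v_p(M_2)\ge (p+3)+p(p-1)=p^2+3$.

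The heart of the matter is the claim that if $p\mid F(1,1,1)$ then $p\mid D(\lambda)$ in $\mathbb Z[\lambda]$ for every primitive $p$-th root of unity $\lambda$; equivalently, $D(\lambda)\bmod p$ has a zero of order at least $p-1$ at $\lambda=1$. Since $M_2=\prod_{j=1}^{p-1}D(w^j)$ is the norm $N_{\mathbb Q(w)/\mathbb Q}(D(w))$ and each of its $p-1$ conjugate factors is then divisible by $p$, the rational integer $M_2$ is divisible by $N(p)=p^{p-1}$, as required. To prove the claim I would work in the local ring $\mathbb Z_p[\lambda]$ with uniformizer $u=1-\lambda$ (so that $p$ is an associate of $u^{p-1}$) and write $\phi_\lambda(F)=\widehat c(S)+uR$, where $S=\phi_\lambda(x)$ is the cyclic shift, $\widehat c(t)=F(t,1,1)=\sum_i f_i(1,1)\,t^i$, and $R$ has entries in $\mathbb Z_p[\lambda]$. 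The reduction $\widehat c(S)\bmod u$ is a circulant that is nilpotent and of corank exactly one when $\widehat c{}'(1)\not\equiv 0$ (larger corank only raises the valuation), so passing to a unit $(p-1)\times(p-1)$ minor reduces $\det\phi_\lambda(F)$ to a single Schur complement, and the task becomes to show that this scalar lies in $(u^{p-1})$. Here one would use $p\mid \widehat c(1)=F(1,1,1)$ together with the clock–shift relation $\phi_\lambda(y)\phi_\lambda(x)=\lambda\,\phi_\lambda(x)\phi_\lambda(y)$ to expand the determinant in powers of $u$ and verify that the coefficients of $u^0,\dots,u^{p-2}$ all vanish.

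The main obstacle is precisely this last step. The mod-$u$ reduction only shows that $\widehat c(S)\bmod u$ is singular, which by itself yields a single factor of $u$ (that is, $v_u(D(\lambda))\ge 1$); the full strength $v_u(D(\lambda))\ge p-1$ is a statement about the entire $u$-expansion and does not follow from rank considerations alone. I expect to control it by noting that the row-dependence $y=\lambda^{r-1}$ enters $\phi_\lambda(F)$ only at order $u$ and higher, and by repeatedly invoking the power-sum identities $\sum_{j=0}^{p-1}(1-w^j)^m=p$ for $1\le m\le p-1$, which force the symmetric combinations of the $(1-w^j)$ occurring in the expansion to be divisible by $p$, thereby raising the $u$-adic valuation to $p-1$.

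Finally, for sharpness one can check that the extremal polynomial of Theorem~\ref{Zp}, namely $F(x,y,z)=A_1p+A_2(1-x)+A_3(1-y)^2$ with $p\nmid A_1A_2A_3$ (read now in $\mathbb Z[H_p]$), already achieves equality: it gives $v_p(M_1)=p+3$ by Theorem~\ref{Zp}, while, being binomial in $x$ with $f_0,f_1$ independent of $z$, formula~\eqref{binomial} yields the constant $D(\lambda)=\prod_{j=0}^{p-1}\bigl(A_1p+A_2+A_3(1-w^j)^2\bigr)-A_2^{\,p}$, whose $u$-adic valuation equals exactly $p-1$ (using $\sum_{j}(1-w^j)^2=p$, so that the leading symmetric terms contribute a single factor of $p$ and no more). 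Hence $v_p(M_2)=p-1$ and $v_p(M_{H_p}(F))=(p+3)+p(p-1)=p^2+3$ exactly; the residual case $p=3$ is handled as in Theorem~\ref{Zp} via \eqref{Z3^2}.
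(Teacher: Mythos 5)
Your overall architecture ($M=M_1M_2^p$, forcing $p\mid F(1,1,1)$, then $p^{p+3}\mid M_1$ from Theorem~\ref{Zp}, with the remaining target $p^{p-1}\mid M_2$ via $u^{p-1}\mid D(\lambda)$) matches the paper, but there are two genuine gaps. First, you never actually prove the key claim $u^{p-1}\mid D(\lambda)$: as you yourself note, the corank/Schur-complement reduction only yields $u\mid D(\lambda)$, and the passage from exponent $1$ to exponent $p-1$ is left as something you ``expect to control.'' The paper gets this step essentially for free from the orbit decomposition $D(z)=A(z)+B(z)$ already set up in the proof of Theorem~\ref{main}: the terms of the determinant expansion whose indices are not all equal come in orbits of size $p$ under the cyclic row/column shift, so $B(z)\equiv 0\bmod p$, while $A(z)=\sum_i\prod_j f_i(w^j,z)$ is a sum of resultant-type products, each congruent to $f_i(1,1)^p$ modulo the relevant power of $u$ (see \eqref{Acong}), whence $D(\lambda)\equiv F(1,1,1)^p\equiv 0\bmod u^{p-1}$. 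Some such global argument is needed; local rank considerations alone will not produce the exponent $p-1$.

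Second, the sharpness argument is incorrect as stated. For $F=A_1p+A_2(1-x)+A_3(1-y)^2$ one has $f_0=A_1p+A_2+A_3(1-y)^2$, $f_1=-A_2$, and \eqref{binomial} gives $D(\lambda)\equiv\prod_j\bigl(A_2+A_3(1-w^j)^2\bigr)-A_2^p\bmod p^2$. It is not true that only the first power sum $\sum_j(1-w^j)^2=p$ matters: all the higher elementary symmetric functions $e_k\bigl((1-w^0)^2,\ldots,(1-w^{p-1})^2\bigr)$ are likewise divisible by $p$ and contribute at the same order, and the total can vanish modulo $p^2$. Concretely, taking $A_2=-1$, $A_3=1$ gives $\prod_j\bigl(-1+(1-w^j)^2\bigr)=\prod_j(-w^j)(2-w^j)=1-2^p$, so $D(\lambda)=2-2^p$ and $p^2\mid D(\lambda)$ exactly when $p$ is a Wieferich prime; your example then gives $v_p(M_{H_p}(F))\geq p^2+3+p$. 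This is precisely why the paper takes $f_0=p+(A-1)^2-(1-y)^2$ with $A$ the least integer in $[2,p-2]$ satisfying $A^p\not\equiv A\bmod p^2$ (such an $A$ exists even for Wieferich primes), computes $D(\lambda)\equiv(A-1)(A^p-A)\bmod p^2$, and concludes $p\parallel D(\lambda)$ from the Fermat-quotient condition. Your proposal therefore needs both a completed divisibility argument for $M_2$ and a Fermat-quotient-aware choice of coefficients in the extremal example.
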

Hence while $\mathscr{S}_1(G)$, the values  coprime to $p$,  are the same for  $G=\mathbb Z_p^3$ and $G=H_p$, we know from \eqref{rough} and Theorem~\ref{pdiv} that $\mathscr{S}_0(G)$, the multiples of $p$, are different.

For the Lind-Lehmer problem \eqref{LindLehmer} we immediately obtain: 
\begin{corollary}
$$ \lambda(H_p)=\lambda (\mathbb Z_p^3) =\frac{1}{p^3} \log \min\{ x\geq 2 \; : \; x^{p-1}\equiv 1 \bmod p^3\}. $$
\end{corollary}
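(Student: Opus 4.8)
The plan is to read off the Lind--Lehmer constant directly from the description of $\mathscr{S}(H_p)=\mathscr{S}_0(H_p)\cup\mathscr{S}_1(H_p)$ already assembled, since by \eqref{LindLehmer} we only need the smallest achievable $|M_{H_p}(F)|\geq 2$, and $|H_p|=|\mathbb Z_p^3|=p^3$. The competition is between the values coprime to $p$ and the (necessarily large) multiples of $p$, so I would isolate the smallest candidate from each class and compare.

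First I would handle the values coprime to $p$. By Corollary~\ref{notp} these form exactly $\mathscr{T}_3=\{x\in\mathbb Z:x^{p-1}\equiv 1\bmod p^3\}$, with every such value genuinely attained. Since $p$ is odd, $p-1$ is even, so $x\in\mathscr{T}_3$ if and only if $-x\in\mathscr{T}_3$; hence the smallest $|x|\geq 2$ occurring equals $x_0:=\min\{x\geq 2:x^{p-1}\equiv 1\bmod p^3\}$. Because the solutions of $x^{p-1}\equiv 1\bmod p^3$ form the subgroup of order $\gcd(p-1,p^2(p-1))=p-1\geq 2$ of the cyclic group $(\mathbb Z/p^3\mathbb Z)^\ast$, there is a nontrivial residue in $[2,p^3-1]$, so $x_0$ exists and satisfies $2\leq x_0\leq p^3$.

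Next I would dispose of the multiples of $p$. By Theorem~\ref{pdiv}, any nonzero element of $\mathscr{S}_0(H_p)$ has absolute value at least $p^{p^2+3}$; and since $p^2+3>3$ for $p\geq 3$, we have $x_0\leq p^3<p^{p^2+3}$. Thus no multiple of $p$ can beat $x_0$, and the minimum in \eqref{LindLehmer} is realized by a value coprime to $p$, giving $\lambda(H_p)=\frac{1}{p^3}\log x_0$.

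Finally, the identical argument applies to $\mathbb Z_p^3$: its values coprime to $p$ are again $\mathscr{T}_3$, so the coprime candidate is the same $x_0$, while \eqref{rough} with $n=3$ forces every nonzero multiple of $p$ to be divisible by $p^{\,1+(p^3-1)/(p-1)}=p^{p^2+p+2}$, again exceeding $x_0\leq p^3$. Hence $\lambda(\mathbb Z_p^3)=\frac{1}{p^3}\log x_0$ as well, and the two constants coincide. There is no serious obstacle here: the only point requiring care is the comparison step, namely confirming that the smallest coprime value $x_0\leq p^3$ is strictly below the least possible multiple of $p$ in each group, so that the minimum is never realized by a multiple of $p$.
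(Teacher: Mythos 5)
Your proof is correct and takes essentially the same route as the paper, which states this corollary as an immediate consequence of Corollary~\ref{notp} and Theorem~\ref{pdiv} for $H_p$, together with \eqref{defT} and \eqref{rough} for $\mathbb Z_p^3$: the smallest attainable value coprime to $p$ (at most $p^3-1$, since $-1\in\mathscr{T}_3$) is compared against the far larger lower bounds $p^{p^2+3}$ and $p^{p^2+p+2}$ on nonzero multiples of $p$. The details you supply --- the symmetry $x\in\mathscr{T}_3\Leftrightarrow -x\in\mathscr{T}_3$ from $p-1$ being even, and the existence of a solution $2\leq x_0\leq p^3$ --- are precisely the points the paper leaves implicit.
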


\subsection{The group $H_3$} 

From above we know $\mathscr{S}_1(H_3)$ and that $\mathscr{S}_0(H_3)\subseteq 3^{12}\mathbb Z$. We show that all multiples of $3^{12}$ can be achieved.
Here and throughout $\Phi_p(x)$ denotes the $p$th cyclotomic polynomial
\be \label{defPhi} \Phi_p(x) :=1+x+\cdots +x^{p-1} =(x^p-1)/(x-1). \ee

\begin{theorem} \label{p=3} Let $G=H_3$, then 
$$ \mathscr{S}_1(H_3)=\{m\; :\; m\equiv \pm 1 \bmod 27\}, \;\;\; \mathscr{S}_0(H_3)= 3^{12}\mathbb Z. $$
\end{theorem}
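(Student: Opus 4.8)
The first statement is immediate from Corollary~\ref{notp}: for $p=3$ the congruence $x^{p-1}\equiv 1\bmod p^3$ becomes $x^2\equiv 1\bmod 27$, and since $(\mathbb Z/27\mathbb Z)^\times$ is cyclic this holds exactly when $x\equiv\pm1\bmod 27$. The inclusion $\mathscr S_0(H_3)\subseteq 3^{12}\mathbb Z$ is Theorem~\ref{pdiv} with $p=3$, where $p^2+3=12$. Thus the whole content of the theorem is the reverse inclusion: I must realize every multiple of $3^{12}$ as a value $M_{H_3}(F)$. My plan is to use the factorization $M_{H_3}(F)=M_1M_2^3$, with $M_1=M_{\mathbb Z_3^2}(F(x,y,1))$ and $M_2=D(w)D(w^2)=|D(w)|^2$ the norm of $D(w)\in\mathbb Z[w]$. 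The aim is to freeze $M_2$ at the value $9$ (so that $M_2^3=3^6$, the minimal power of $3$ consistent with the bookkeeping behind Theorem~\ref{pdiv}) while letting $M_1$ run through all of $3^6\mathbb Z$; then $M_{H_3}(F)=3^6t\cdot 3^6=3^{12}t$, and \eqref{Z3^2} is what guarantees that such slice values $M_1=3^6t$ can occur.

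The device that separates the two factors is the cyclotomic element $\Phi_3(z)=1+z+z^2$ of \eqref{defPhi}. Since $\phi_\lambda(z)=\lambda I$, we have $\phi_w(\Phi_3(z)R)=\Phi_3(w)\,\phi_w(R)=0$ for any $R$, so adding $\Phi_3(z)R(x,y,z)$ to $F$ leaves $\phi_w(F)$ and $\phi_{w^2}(F)$, hence $M_2$, completely unchanged, while altering the slice $F(x,y,1)$ by $\Phi_3(1)R(x,y,1)=3R(x,y,1)$. Concretely I would start from the binomial base $F_0=1+x(y+z)$: by \eqref{binomial} and \eqref{binomial2} one computes $D(w)=1+(w^3+1)=3$, so $M_2=9$, while the slice $1+x(y+1)$ has coefficient sum $3$, so that $3\mid M_1$. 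I then take $F=F_0+\Phi_3(z)R$ and tune $R$ so that the slice $1+x(y+1)+3R(x,y,1)$ has $\mathbb Z_3^2$ determinant equal to the prescribed multiple $3^6t$.

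The main obstacle is precisely this last tuning step, which is a \emph{constrained} version of the $\mathbb Z_3^2$ achievability result: because the slice is forced to lie in a fixed residue class modulo $3$, I cannot simply quote \eqref{Z3^2} but must show that $M_{\mathbb Z_3^2}\bigl(1+x(y+1)+3R\bigr)$ sweeps out all of $3^6\mathbb Z$ as $R$ varies, never collapsing to $0$ (the bare base gives $M_1=0$, which is exactly what the correction must repair). I expect to settle this by a short $3$-adic valuation computation confirming $v_3(M_1)=6$ for suitable $R$, together with a direct choice of two independent parameters in $R$ that render the unit part of $M_1$ surjective; signs are absorbed using $-1\in\mathscr S_1(H_3)$, and any stray residues the explicit family misses can be recovered through the multiplicativity of $M_{H_3}$ by combining with units $\equiv\pm1\bmod 27$ and with the cubes $M_{H_3}(f_0(y,z))=M_{\mathbb Z_3^2}(f_0)^3$.
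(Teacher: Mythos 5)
Your reductions of the first claim to Corollary~\ref{notp} and of the inclusion $\mathscr{S}_0(H_3)\subseteq 3^{12}\mathbb Z$ to Theorem~\ref{pdiv} are exactly what the paper does, and your structural device --- adding $\Phi_3(z)R$ so that $\phi_{w}(F)$ and $\phi_{w^2}(F)$, hence $M_2$, are untouched while the slice $F(x,y,1)$ shifts by $3R(x,y,1)$ --- is also the mechanism underlying the paper's construction (all five of its witness families carry a term $m\Phi_3(x)\Phi_3(y)\Phi_3(z)$ for precisely this reason). The problem is that you stop at the one step that carries all the content: you never prove that $M_{\mathbb Z_3^2}\bigl(1+x(y+1)+3R\bigr)$ sweeps out all of $3^6\mathbb Z$ as $R$ varies. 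Equation \eqref{Z3^2} gives surjectivity onto $3^6\mathbb Z$ over \emph{all} polynomials whose coefficient sum is divisible by $3$, not over the fixed coset $G\equiv 1+x+xy \bmod 3$; this constrained surjectivity is genuinely stronger, is not a consequence of anything quoted, and in your write-up is only asserted (``I expect to settle this\dots'').

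Worse, the fallback mechanisms you offer cannot repair arbitrary omissions. Multiplying an achieved value $3^{12}t_0$ by unit measures $u\equiv\pm1\bmod 27$ produces $3^{12}t_0u$ with $t_0u$ ranging over $t_0(\pm1+27\mathbb Z)$, i.e.\ over progressions of common difference $27t_0$ --- not over all $t\equiv\pm t_0\bmod 27$ --- so for $|t_0|>1$ entire residue classes remain out of reach; and multiplying by a non-unit cube $M_{\mathbb Z_3^2}(f_0)^3\in 3^{18}\mathbb Z$ overshoots $3^{12}\mathbb Z$ altogether. The paper sidesteps all of this by exhibiting five explicit families whose measures are $3^{12}(1+9m)$, $3^{12}(2+9m)$, $3^{13}(1+3m)$, $3^{14}m$ and $3^{12}(4+9m)$; together with $M_G(-F)=-M_G(F)$ these cover every residue of $M/3^{12}$ modulo $9$, with the $m$-term supplying the additive sweep inside each class. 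To complete your argument you must either prove the constrained $\mathbb Z_3^2$ surjectivity honestly (a nontrivial computation in its own right) or, as the paper does, replace it by finitely many explicit verifications.
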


\begin{proof} From above 
we have $\mathscr{S}_1(H_3)$ as stated and $\mathscr{S}_0(H_3)\subseteq 3^{12}\mathbb Z$.
To see that we get all multiples of $3^{12}$, observe that  $M_G(-F)=-M_G(F)$ with
\begin{align*}
M_G(z+y-y^2 +(y+1)x +  m\Phi_3(x)\Phi_3(y)\Phi_3(z) )  & =3^{12}(1+9m), \\
M_G(1+2x+x^2\Phi_3(y) +  m\Phi_3(x)\Phi_3(y)\Phi_3(z))  & =3^{12}(2+9m),\\
M_G(1+2x+(z+x^2)\Phi_3(y)+  m\Phi_3(x)\Phi_3(y)\Phi_3(z))&=3^{13}(1+3m),\\
M_G(1+2x-x\Phi_3(y) +  m\Phi_3(x)\Phi_3(y)\Phi_3(z))  & =3^{14}m,
\end{align*}
and $ 3^{12}(4+9m)$ from the polynomials
$$ 1+y-y^2+(y+1)x+\Phi_3(x)\Phi_3(y)+(z-1)\Phi_3(y)+(z-1)^2x\Phi_3(y)  +  m\Phi_3(x)\Phi_3(y)\Phi_3(z). $$
\end{proof}

\section{Proof of Theorems~\ref{main},~\ref{achieve} and~\ref{pdiv}}\label{Proofs}

For the proof of Theorem~\ref{main} we require two preliminary results.

\begin{lemma}\label{lemma1}  Suppose $f(x)$ is in $\mathbb Z[x]$ then 
$$ \frac{1}{p} \sum_{y^p=1} f(y)^p \equiv \prod_{y^p=1} f(y) \bmod p^2. $$
\end{lemma}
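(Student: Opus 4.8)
My plan is to treat the two factors separately and reconnect them through the root-of-unity filter. For any $h\in\mathbb Z[x]$ let $[h]$ denote the sum of the coefficients of $h$ at exponents divisible by $p$; since $\sum_{y^p=1}y^m$ equals $p$ when $p\mid m$ and $0$ otherwise, one has the identity $[h]=\frac1p\sum_{y^p=1}h(y)$, which in particular shows that $\sum_{y^p=1}h(y)$ is a rational integer divisible by $p$. Writing $S=\sum_{y^p=1}f(y)^p$ and $P=\prod_{y^p=1}f(y)$, applying this with $h=f^p$ gives $\frac1p S=[f^p]$, while $P=\Res(x^p-1,f)$ is a symmetric integer; so both sides of the lemma are integers and the claim is a congruence modulo $p^2$.

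Next I would introduce the norm form $\prod_{i=0}^{p-1}f(w^i x)$, which is invariant under $x\mapsto wx$ and hence equals $G(x^p)$ for some $G\in\mathbb Z[x]$. Because $T^p-x^p\equiv (T-x)^p\bmod p$, reducing $\Res_T(T^p-x^p,f(T))$ modulo $p$ yields $\prod_{i=0}^{p-1}f(w^i x)\equiv f(x)^p \bmod p$, so there is a $K\in\mathbb Z[x]$ with $f(x)^p-\prod_{i=0}^{p-1}f(w^i x)=pK(x)$. Applying the filter $[\cdot]$ to this identity and using that every exponent of $G(x^p)$ is divisible by $p$, so that $[G(x^p)]=G(1)=P$, I obtain the exact identity $\frac1p S-P=[f^p]-P=p\,[K]$. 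Consequently the lemma is \emph{equivalent} to the single congruence $[K]\equiv 0\bmod p$, and the whole content has been funnelled into understanding the coefficients of one auxiliary polynomial.

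The remaining step, proving $[K]\equiv 0\bmod p$, is the genuinely arithmetic one. Here I would resist re-expressing $[K]$ through values at roots of unity, since $[K]=\frac1p\sum_{y^p=1}K(y)$ only loops back to the original statement; instead I would compute $K\bmod p$ directly from its definition. Expanding $f(x)^p$ by the multinomial theorem gives $f(x)^p\equiv\sum_k c_k^p x^{pk}+pR(x)\bmod p^2$, where $R$ collects the mixed terms $\frac1p\binom{p}{a_0,a_1,\dots}\prod_j c_j^{a_j}$; expanding $\prod_{i=0}^{p-1}f(w^i x)$ as a polynomial in $x^p$, subtracting, and dividing by $p$ pins down $K\bmod p$, after which summing its coefficients at exponents divisible by $p$ reduces the assertion to a congruence among multinomial coefficients and the $c_k$. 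I expect this last combinatorial cancellation to be the main obstacle: the Newton-identity expression $S-pP=\sum_{k=1}^{p-1}(-1)^{k-1}e_k\,p_{p-k}$ is only visibly divisible by $p^2$ termwise (each $e_k$ and each $p_{p-k}$ being divisible by $p$), and the extra factor of $p$ demanded by the lemma arises solely through cancellation between these terms, so any correct argument must track the coefficients modulo $p^2$ rather than rely on termwise valuation bounds. An equivalent route would perform the same bookkeeping $p$-adically in $\mathbb Z[w]$ at the prime $u=1-w$, writing $f(w^i)=f(1)+\delta_i$ with $\delta_i\in(u)$ and controlling the $u$-adic valuations of the symmetric functions of the $\delta_i$.
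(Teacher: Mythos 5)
Your reduction is correct and cleanly organized: the filter identity $[h]=\frac1p\sum_{y^p=1}h(y)$, the integrality of both sides, the norm form $\prod_{i=0}^{p-1}f(w^ix)=G(x^p)$ with $G(1)=P$, and the exact identity $\frac1pS-P=p\,[K]$ are all fine. But the proposal stops exactly where the lemma's content begins. Everything up to that point is bookkeeping; the assertion $[K]\equiv 0\bmod p$ \emph{is} the lemma, merely rewritten, and you do not prove it. You describe a plan (expand $f(x)^p$ by the multinomial theorem mod $p^2$, expand the norm form, subtract, divide by $p$, sum the relevant coefficients) and then explicitly flag the resulting ``congruence among multinomial coefficients'' as the main obstacle you have not overcome. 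Your own diagnosis is accurate --- the extra factor of $p$ does arise only from cancellation across terms, not termwise --- but diagnosing the difficulty is not the same as resolving it, so the proof is incomplete.

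For comparison, the paper closes precisely this gap with symmetric functions of the values $A_i=f(w^{i-1})$. One first checks $p\mid e_k(A_1,\ldots,A_p)$ for $1\le k\le p-1$ and $p\mid s_l(A_1,\ldots,A_p)$ for all $l$ (both because $A_i\equiv A_1\bmod(1-w)$). Newton--Girard then gives $s_k\equiv(-1)^{k-1}ke_k\bmod p^2$ for $k<p$, and substituting this into $s_p=pe_p+\sum_{i=1}^{p-1}(-1)^is_ie_{p-i}$ turns the error term into $-\sum_{i=1}^{p-1}ie_ie_{p-i}$ modulo $p^3$; pairing the $i$ and $p-i$ summands yields $\sum_{i=1}^{(p-1)/2}p\,e_ie_{p-i}$, which is divisible by $p\cdot p\cdot p$. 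That pairing is the cancellation your multinomial approach would have to reproduce coefficient by coefficient. If you want to salvage your framework, the shortest route is to abandon the direct computation of $K\bmod p$ and instead run this Newton--Girard argument on the $A_i$; as written, your argument establishes only the trivial bound $p^2\mid S-pP$, not the required $p^3$.
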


\begin{proof} We write $A_1,\ldots,A_{p}$ for $f(1),f(w),\ldots ,f(w^{p-1})$. We need to show that 
$$ A_1^p+\cdots +A_p^p \equiv p A_1\cdots A_p \bmod p^3. $$
First, observe that 
$$p\mid  e_i(A_1,\ldots ,A_p), \;\; i=1,\ldots ,p-1,\;\;\; e_p(A_1,\ldots ,A_p)=A_1\cdots A_p, $$ 
where the $e_i$ are the usual elementary symmetric functions
$$1+\sum_{i=1}^p e_i(A_1,\ldots ,A_p)x^i = \prod_{i=1}^p (1+A_ix).$$ 
To see this observe  that the $A_i\equiv A_1$ mod $u$ where $u=1-w$ and hence  $e_i(A_1,\ldots ,A_p) \equiv e_i(A_1,\ldots ,A_1) = \binom{p}{i} A_1^i\equiv 0$ mod $u$.  Since these will be integers this holds  mod $p$ and will be 0 for $i=1,\ldots,p-1$. 
Likewise  for all $l$ in $\mathbb N$ the sums
$$ s_l(A_1,\ldots , A_p)=A_1^l+\cdots +A_p^l \equiv pA_1^l\equiv 0 \bmod p. $$
From the Newton-Girard formulae we have for all $1\leq k\leq p$
\begin{align*}  s_k(A_1,\ldots,A_p) &  =(-1)^{k-1}ke_k(A_1,\ldots ,A_p) +\sum_{i=1}^{k-1}(-1)^{k-1+i} s_i(A_1,\ldots ,A_p) e_{k-i}(A_1,\ldots ,A_p) \\
 & \equiv (-1)^{k-1}k e_k(A_1,\ldots ,A_p) \bmod p^2. 
\end{align*}
From this, and the fact that $p\mid  e_{p-i}(A_1,\ldots ,A_p)$, we get
\begin{align*}  s_p(A_1,\ldots,A_p) &  =pe_p(A_1,\ldots ,A_p) +\sum_{i=1}^{p-1}(-1)^{i} s_i(A_1,\ldots ,A_p) e_{p-i}(A_1,\ldots ,A_p) \\
 & \equiv p e_p(A_1,\ldots ,A_p)-\sum_{i=1}^{p-1}  ie_{i}(A_1,\ldots ,A_p)e_{p-i}(A_1,\ldots ,A_p)  \bmod p^3 \\
 & =p e_p(A_1,\ldots ,A_p)-\sum_{i=1}^{(p-1)/2}  pe_{i}(A_1,\ldots ,A_p)e_{p-i}(A_1,\ldots ,A_p)  \bmod p^3\\ 
 & \equiv p e_p(A_1,\ldots ,A_p) \bmod p^3,
\end{align*}
as claimed.
\end{proof}

\begin{lemma}\label{lemma2}
Suppose that $n<p$ and that 
$$ \prod_{i=1}^n (1+\alpha_ix)=1+\sum_{i=1}^n e_i(\alpha_1,\ldots ,\alpha_n) x^i \in \mathbb Z [x] $$
with $p\mid e_i(\alpha_1,\ldots ,\alpha_n)$ for all $i=1,\ldots ,n$,  then
$$ \prod_{i=1}^n (1+\alpha_i^p x)=1+\sum_{i=1}^n e_i(\alpha_1^p,\ldots ,\alpha_n^p) x^i \in \mathbb Z [x] $$
has $p^3\mid e_i(\alpha_1^p,\ldots ,\alpha_n^p)$ for all $i=1,\ldots ,n$.
\end{lemma}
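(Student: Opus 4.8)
The plan is to convert the statement into a single product identity over the $p$th roots of unity and then read the divisibility off complete character sums. Write $\omega:=e^{2\pi i/p}$, set $E(x):=\prod_{i=1}^n(1+\alpha_ix)=1+\sum_{i=1}^n e_i(\alpha_1,\dots,\alpha_n)x^i$, and likewise $E_p(x):=\prod_{i=1}^n(1+\alpha_i^px)=1+\sum_{i=1}^n e_i(\alpha_1^p,\dots,\alpha_n^p)x^i$. Since $p$ is odd, $\prod_{j=0}^{p-1}(1+c\,\omega^j)=1+c^p$ for every $c$; taking $c=\alpha_it$ and multiplying over $i$ yields the key identity
\be
E_p(t^p)=\prod_{j=0}^{p-1}E(\omega^jt).
\ee
Hence $e_i(\alpha_1^p,\dots,\alpha_n^p)$ is exactly the coefficient of $t^{pi}$ on the right, and it suffices to show that every coefficient of a positive power of $t$ in $\prod_{j}E(\omega^jt)$ is divisible by $p^3$.

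Next I would feed in the hypothesis. Because $E(0)=1$ and $p\mid e_i$ for all $i\ge1$, I can write $E(x)=1+pG(x)$ with $G(x)=\sum_{i=1}^n(e_i/p)x^i\in\mathbb Z[x]$, $G(0)=0$, and $\deg G\le n<p$. Expanding the product gives
\be
\prod_{j=0}^{p-1}\bigl(1+pG(\omega^jt)\bigr)=\sum_{k=0}^{p}p^k\,\sigma_k,\qquad \sigma_k=e_k\bigl(G(t),G(\omega t),\dots,G(\omega^{p-1}t)\bigr),
\ee
and each $\sigma_k$ lies in $\mathbb Z[t]$, being symmetric in the Galois conjugates $G(\omega^jt)$. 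All terms with $k\ge3$ are automatically divisible by $p^3$, so the entire problem reduces to controlling $\sigma_1$ and $\sigma_2$.

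The heart of the argument, and the place where $n<p$ is indispensable, is the vanishing of complete sums of roots of unity: $\sum_{j=0}^{p-1}\omega^{mj}=0$ whenever $p\nmid m$. Every monomial occurring in $G$ or in $G^2$ has degree $m$ with $1\le m\le 2n<2p$, so the only exponent divisible by $p$ that can survive is $m=p$. This forces $\sigma_1=\sum_jG(\omega^jt)=0$ outright. Combining this with $2\sigma_2=\sigma_1^2-\sum_jG(\omega^jt)^2=-\sum_jG(\omega^jt)^2$ and the same character-sum collapse gives $\sum_jG(\omega^jt)^2=p\,c_p\,t^p$ with $c_p=\sum_{a+b=p}g_ag_b$; since $p$ is odd the pairing $(a,b)\leftrightarrow(b,a)$ makes $c_p$ even, so $\sigma_2=-\tfrac{p c_p}{2}t^p\in p\,\mathbb Z[t]$ and is supported only on $t^p$. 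Therefore $p^2\sigma_2$ is divisible by $p^3$ and influences only the $t^p$-coefficient, namely $e_1(\alpha_1^p,\dots,\alpha_n^p)$. Assembling the three contributions ($\sigma_1=0$, $p^3\mid p^2\sigma_2$, and $p^3\mid p^k\sigma_k$ for $k\ge3$) shows every coefficient of $t^{pi}$ with $i\ge1$ is divisible by $p^3$, which is precisely the claim.

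I expect the main obstacle to be recognising the right mechanism rather than any hard computation: a direct attack through Newton's identities on the power sums $s_k(\alpha)$, or through the $p$-adic valuations of the $\alpha_i$ (which are only $\ge 1/n$ and so give $\alpha_i^p$ a valuation that degrades to near $1$ as $n\to p$), stalls at $p^1$ or $p^2$. The extra factor of $p$ materialises only from the exact vanishing of $\sigma_1$ and the forced divisibility of $\sigma_2$, both of which depend entirely on $n<p$ guaranteeing that $p$ is the sole multiple of $p$ among the relevant exponents $1,\dots,2n$. The remaining bookkeeping — integrality of $\sigma_2$ (so the factor $\tfrac12$ is harmless for odd $p$) and the verification that the $k\ge3$ terms cannot spoil a lower coefficient — is routine.
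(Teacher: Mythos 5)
Your proof is correct, and it takes a genuinely different route from the paper. You package everything into the norm identity $\prod_{j=0}^{p-1}E(\omega^j t)=E_p(t^p)$ (valid since $p$ is odd), write $E=1+pG$ with $G\in\mathbb Z[t]$ of degree $\le n<p$ and $G(0)=0$, and then read off the $p^3$-divisibility from the vanishing of the complete sums $\sum_j\omega^{jm}$ for $1\le m\le 2n<2p$, $m\ne p$: this kills $\sigma_1$ exactly and forces $p\mid\sigma_2$ (your integrality of the $\sigma_k$ via Galois invariance, and the parity argument for $c_p$, both check out). The paper instead argues entirely through the Newton--Girard recursions, first showing $p\mid e_k(\alpha_1^p,\ldots,\alpha_n^p)$, then grading the power sums ($p\mid s_k$ for $k\le n$, $p^2\mid s_k$ for $k>n$, $p^3\mid s_k$ for $k>2n$) and feeding these into the recursion for $e_k(\alpha_1^p,\ldots,\alpha_n^p)$, with a separate pairing argument $e_i e_{p-i}$ for the case $k=1$, $p\le 2n$. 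So your closing remark that a direct attack via Newton's identities ``stalls at $p^1$ or $p^2$'' is not borne out --- that is precisely the paper's (successful) strategy, and its final pairing step plays the same role as your evenness of $c_p$. Your argument is arguably more transparent: it isolates the single place where $n<p$ matters and shows that the entire excess divisibility is concentrated in the coefficient of $t^p$, i.e.\ in $e_1(\alpha_1^p,\ldots,\alpha_n^p)$. The paper's version is more elementary (no roots of unity or Galois descent needed) and runs parallel to the symmetric-function machinery already set up in its Lemma~\ref{lemma1}.
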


\begin{proof} We write $s_j(\alpha_1,\ldots ,\alpha_n)=\sum_{i=1}^n \alpha_i^j$. Observe that 
$$e_k(\alpha_1^p,\ldots ,\alpha_n^p)\equiv e_k(\alpha_1,\ldots ,\alpha_n)^p\equiv 0 \bmod p $$
for $k=1,\ldots ,n$. From the Newton-Girard formulae we
can write
\begin{align} \label{rec} ke_k(\alpha_1^p,\ldots ,\alpha_n^p) & = \sum_{i=1}^{k-1} (-1)^{i-1} e_{k-i}(\alpha_1^p,\ldots ,\alpha_n^p)s_i(\alpha_1^p,\ldots ,\alpha_n^p)+(-1)^{k-1}s_{k}(\alpha_1^p,\ldots ,\alpha_n^p)\\ \nonumber
  & = \sum_{i=1}^{k-1} (-1)^{i-1} e_{k-i}(\alpha_1^p,\ldots ,\alpha_n^p)s_{ip}(\alpha_1,\ldots ,\alpha_n)+(-1)^{k-1}s_{kp}(\alpha_1,\ldots ,\alpha_n),
\end{align}
for $1\leq k\leq n$. For $1\leq k\leq n$ we successively have 
\begin{align*}
s_k(\alpha_1,\ldots ,\alpha_n) &=(-1)^{k-1}ke_k(\alpha_1,\ldots ,\alpha_n) -\sum_{i=1}^{k-1} (-1)^{i} e_i(\alpha_1,\ldots ,\alpha_n)s_{k-i}(\alpha_1,\ldots ,\alpha_n)\\
&\equiv 0 \bmod p,
\end{align*}
while for $k>n$ we have
\be \label{k>n}   s_k(\alpha_1,\ldots ,\alpha_n)= \sum_{i=1}^n (-1)^{i-1} e_{i}(\alpha_1,\ldots ,\alpha_n) s_{k-i}(\alpha_1,\ldots ,\alpha_n)\equiv 0 \bmod p^2,    \ee
and for $k>2n$,
$$  s_k(\alpha_1,\ldots ,\alpha_n)\equiv 0 \bmod p^3. $$
Hence for $k\geq 2$ we have $p^3\mid s_{kp}(\alpha_1,\ldots ,\alpha_n)$, and for $1\leq i<k$ we have $p\mid e_{k-i}(\alpha_1^p,\ldots ,\alpha_n^p)$ and $p^2\mid s_{ip}(\alpha_1,\ldots ,\alpha_n)$  in  \eqref{rec}, so 
$$  ke_k(\alpha_1^p,\ldots ,\alpha_n^p)  \equiv    0 \bmod p^3. $$
For $k=1$ and $p>2n$, we have
$$  e_1(\alpha_1^p,\ldots ,\alpha_n^p)  =s_p(\alpha_1,\ldots ,\alpha_n)\equiv    0 \bmod p^3. $$
Finally, for $k=1$ and $p\leq 2n$, using \eqref{k>n} for any terms $i<p-n$, 
\begin{align*} s_1(\alpha_1^p,\ldots ,\alpha_n^p) & = s_p(\alpha_1,\ldots ,\alpha_n)=\sum_{i=1}^n (-1)^{i-1}e_i(\alpha_1,\ldots ,\alpha_n)s_{p-i}(\alpha_1,\ldots ,\alpha_n) \\
 & \equiv  \sum_{i=p-n}^n (-1)^{i-1}e_i(\alpha_1,\ldots ,\alpha_n)s_{p-i}(\alpha_1,\ldots ,\alpha_n) \bmod p^3\\  
 & \equiv  \sum_{i=p-n}^n (-1)^{i-1}e_i(\alpha_1,\ldots ,\alpha_n)(-1)^{p-i-1}(p-i) e_{p-i}(\alpha_1,\ldots ,\alpha_n) \bmod p^3 \\
 & = -\frac{1}{2}   \sum_{i=p-n}^n (p-i + i) e_i(\alpha_1,\ldots ,\alpha_n)e_{p-i}(\alpha_1,\ldots ,\alpha_n)  \equiv 0 \bmod p^3.  \qedhere
\end{align*}
\end{proof}

\begin{proof}[Proof of Theorem~\ref{main}] For $z =w^j$, $j=1,\ldots ,p-1$, we expand the determinant $D(z)$, 
observing that if we have a term $\ve f_{i_1}(1,z)f_{i_2}(z,z)\cdots f_{i_{p-1}}(z^{p-1},z),$ $\ve=\pm 1,$ with the $i_j$ not all the same, then we also have all the shifts (with the same sign)
 $$\ve f_{i_1}(z^j,z)f_{i_2}(z^{j}z,z)\cdots f_{i_{p-1}}(z^jz^{p-1},z), \;\; j=1,\ldots ,p-1. $$
To see this, observe that the matrices cyclically shifting the rows up one and the columns one to the left have determinant~1.
Hence, gathering those sets of $p$ terms in $B(z)$ and the single  terms with the ${i_j}$ all the same in $A(z)$, we have 
$$ D(z) = A(z) + B(z) $$
with, applying Lemma~\ref{lemma1} at the last step, 
\begin{align} \label{Acong} A(z) & = \sum_{i=0}^{p-1} f_i(1,z)f_i(w,z)f_i(w^2,z)\cdots f_i(w^{p-1},z) =   \sum_{i=0}^{p-1} f_i(1,z)^p + pt(z) \nonumber\\ 
& = \sum_{i=0}^{p-1} f_i(1,1)^p + pt(1) +p(z-1)t_1(z)+(z-1)^pt_2(z) \\
& = \sum_{i=0}^{p-1}  f_i(1,1)f_i(w,1)f_i(w^2,1)\cdots f_i(w^{p-1},1)  +p(z-1)t_1(z)+(z-1)^pt_2(z) \nonumber \\ \nonumber
& \equiv \frac{1}{p} \sum_{y^p=1} \sum_{i=0}^{p-1} f_i(y,1)^p + p(z-1)t_1(z)+(z-1)^pt_2(z) \bmod p^2,
\end{align}
and, with the $\ve (\vec{n})=\pm 1$, 
\begin{align}  \label{Bcong} B(z)  & = \sum_{\vec{n}=(n_1,\ldots ,n_p)\in \mathscr{B}} \sum_{y^p=1}\ve(\vec{n})f_{n_1}(y,z)f_{n_2}(yz,z)\cdots f_{n_p}(yz^{p-1},z)\\ &= pk(z)
  = pk(1)+p(z-1)k_1(z) \nonumber\\
 & =  \sum_{\vec{n} \in \mathscr{B}} \sum_{y^p=1}\ve (\vec{n})f_{n_1}(y,1)f_{n_2}(y,1)\cdots f_{n_p}(y,1)+p(z-1)k_1(z). \nonumber
\end{align}
From the $\mathbb Z_p$  circulant determinants we have
$$ \prod_{x^p=1} F(x,y,1)=  \sum_{i=0}^{p-1} f_i(y,1)^p + p \sum_{\vec{n}\in \mathscr{B}}\ve (\vec{n})f_{n_1}(y,1)\cdots f_{n_p}(y,1), $$
the determinant of the matrix of the same form as $D(z)$, but with the $f_i(z^j,z)$ replaced by $f_i(y,1)$. Hence
$$ D(z)=\frac{1}{p} \sum_{y^p=1}  \prod_{x^p=1} F(x,y,1) + p(z-1)k_2(z)+ (z-1)^pt_2(z) \bmod p^2, $$
giving
$$ M_2= \prod_{j=1}^{p-1} D(w^j) = \left(\frac{1}{p} \sum_{y^p=1}  \prod_{x^p=1} F(x,y,1) \right)^{p-1} \bmod p^2, $$
and
$$ M_2^p \equiv \left(\frac{1}{p} \sum_{y^p=1}  \prod_{x^p=1} F(x,y,1) \right)^{p(p-1)} \bmod p^3. $$
Writing
$$ \prod_{x^p=1} F(x,y,1)= c_0+c_1y+ \cdots + c_{p-1}y^{p-1}   \bmod y^p-1, $$
we have
\begin{equation}\label{eqnM2}
\frac{1}{p} \sum_{y^p=1}  \prod_{x^p=1} F(x,y,1) = c_0,\;\;\; M_2^p\equiv c_0^{p(p-1)} \bmod p^3.
\end{equation}
We note that 
\begin{align*}
 \prod_{x^p=1} F(x,y,1)  & = F(1,y,1)^p +ph(y)= F(1,y^p,1)^p + ph_2(y)\\
 & =F(1,1,1)^p + ph_2(y) \bmod y^p-1,
\end{align*}
so that
\begin{equation}\label{eqnc0F}
c_0\equiv F(1,1,1)^p \bmod p \quad\mathrm{and}\quad p\mid c_1,\ldots ,c_{p-1}.
\end{equation}
If  $p\mid c_0$ then $p \mid F(1,1,1)$  and $p\mid M_2$,  so certainly $p^3\mid M$ and the congruence is trivial (a more precise result for this case is established in Theorem~\ref{pdiv}).
So suppose  that $p\nmid c_0$  and write
$$  c_0+c_1y+\cdots + c_{p-1}y^{p-1}  =c_0 \left(1+d_1y+\cdots + d_{p-1}x^{p-1}\right) \bmod p^3, \;\;\; p\mid d_1,\ldots ,d_{p-1},$$ 
and
$$ 1+d_1x+\cdots + d_{p-1}x^{p-1}=\prod_{i=1}^{r} (1+\alpha_i x),  \;\;\; p\mid d_1,\ldots ,d_{p-1}, $$
so that by Lemma~\ref{lemma2}
\begin{equation}\label{eqnM1}
M_1= \prod_{y^p=1} \prod_{x^p=1} F(x,y,1) = c_0^p\prod_{y^p=1} \prod_{i=1}^r (1+y\alpha_i) =c_0^p\prod_{i=1}^r (1+\alpha_i ^p) \equiv c_0^p \bmod p^3.
\end{equation}
Combining \eqref{eqnM2}, \eqref{eqnc0F} and \eqref{eqnM1} we conclude
\[
M=M_1 M_2^p \equiv c_0^{p^2} \equiv F(1,1,1)^{p^3} \bmod p^3.  \qedhere
\]
\end{proof}

\begin{proof}[Proof of Theorem~\ref{achieve}]
Suppose that $a$ in $\mathbb N$ has  $p\nmid a$.  We define $g(x)$ and  $h(x)$ in $\mathbb Z[x]$ by
\begin{align*}
(1+y+\cdots + y^{a-1})^p & = a + p g(y) \bmod y^p-1, \\
(a+pg(y))^p & = a^p + p^2 h(y)  \bmod y^p-1.
\end{align*}
With $\Phi_p(x)$ as in \eqref{defPhi}, we take
$$ F(x,y,z) =(1+z+\cdots + z^{a-1}) + g(y)\Phi_p(z) + h(x)\Phi_p(y)\Phi_p(z) + m\Phi_p(x)\Phi_p(y)\Phi_p(z). $$
Then
$$ F(x,y,1)=a+pg(y) + ph(x)\Phi_p(y) + mp \Phi_p(x)\Phi_p(y) $$
and for $j=1,\ldots ,p-1$,
$$ F(x,w^j,1)=a+p g(w^j) =\left(1+w^j+\cdots + w^{j(a-1)} \right)^p$$
is a unit, and 
\begin{align*}
F(x,1,1) & = a+pg(1) +p^2h(x)+mp^2\Phi_p(x) \\
 & =a^p+p^2h(x)+mp^2\Phi_p(x) \\
 & = (1+x+\cdots + x^{a-1})^{p^2} +mp^2\Phi_p(x).
\end{align*}
For $x=w^j$, $j=1,\ldots ,p-1$ this equals the unit $(1+w^j+\cdots +w^{j(a-1)})^{p^2}$ and for $x=1$ it equals $a^{p^2}+mp^3$. Hence $M_1=a^{p^2}+mp^3$.
For $\lambda=w^j$, $j=1,\ldots ,p-1$, we have 
$$\phi_{\lambda}(F)= (1+\lambda +\cdots +\lambda^{a-1})I_p, \;\; D(\lambda)= (1+\lambda +\cdots +\lambda^{a-1})^p, $$
a unit, and $M_2(F)=1$ and $M_{H_p}(F)=a^{p^2}+mp^3$. 
\end{proof}

\begin{proof}[Proof of Theorem~\ref{pdiv}]
 With $u=1-w$ we have that the $f_i(w^j,w^k)\equiv f_i(1,1)$ mod $u$ and hence $D(z)$ is congruent 
mod $u$ to a circulant determinant, the $\mathbb Z_p$ measure of $F(x,1,1)$, congruent to $F(1,1,1)^p$ mod $u$,
while $M_1$ is $F(1,1,1)^{p^2}$ mod $p^2$. Hence if $p\mid M_G(F)$ we must have $p\mid F(1,1,1)$. Being a $\mathbb Z_p^2$ measure, this says that $p^{p+3}\mid M_1$ by Theorem~\ref{Zp}. 

From \eqref{Acong} and \eqref{Bcong} we have
$$ D(z)\equiv    \sum_{i=0}^{p-1} f_i(1,1)^p \equiv F(1,1,1)^p   \equiv 0 \mod u^{p-1} $$
in $\mathbb Z[w]$ and $u^{(p-1)^2}\mid M_2$. Since this is an integer, we conclude that $p^{p-1}\mid M_2$ and $p^{p^2+3}\mid M=M_1 M_2^p.$

Suppose that $p\geq 5$ (the case $p=3$ is dealt with in Theorem~\ref{p=3}). Consider 
$$ F(x,y,z)=p+(A-1)^2(1-x)-(1-y)^2 =f_0(y)+x f_1(y), $$
with $f_0(y)=p+(A-1)^2-(1-y)^2$, $f_1(y)= - (A-1)^2 $, 
where $A$ is the smallest integer $2\leq A \leq p-2$ such that $A^p\neq A$ mod $p^2$.
Note that unless $p$ is a Wieferich prime we can take $A=2$ and in the case of a Wieferich prime we have $(p-2)^p\equiv -2^p \equiv -2 \neq p-2$ mod $p^2$, so there will be such an $A$.

As in the proof of Theorem~\ref{Zp} we get that $p^{p+3}\parallel M_1$.  Observe that
\begin{align*}  \prod_{j=0}^{p-1}f_0(w^j) & =\prod_{j=0}^{p-1} (p+(A-1)^2-(1-w^j)^2) \\
  &  \equiv   \prod_{j=0}^{p-1} ((A-1)^2-(1-w^j)^2)
+ p\sum_{i=0}^{p-1}  \prod_{\stackrel{j=0}{j\neq i}}^{p-1} ((A-1)^2-(1-w^j)^2) \bmod p^2\\
 & \equiv  \prod_{j=0}^{p-1} (A-2+w^j)(A-w^j) + p\sum_{i=0}^{p-1} (A-1)^{2(p-1)} \bmod pu^2 \\
 &  \equiv  ((A-2)^p +1)(A^p-1) \bmod pu^2.
\end{align*}
Since this is an integer we actually  have a mod $p^2$ congruence. Since $F$ is linear in $x$ we can use  the formula \eqref{binomial} and from  the minimality of $A$ we find
\begin{align*}  D(z) & \equiv ( (A-2)^p+1)(A^p-1)  -(A-1)^{2p}  \\
 & \equiv (A-1)(A^p-1)-(A-1)^2 = (A-1)(A^p-A)  \bmod p^2\end{align*}
and $p\parallel D(z)$. Hence $p^{p-1}\parallel M_2$ and $p^{(p+3)+p(p-1)} \parallel M_G(F).$
\end{proof}

\section{Some infinite non-abelian groups}\label{secInfinite}

We conclude with some remarks on analogues of the Lind-Mahler measure for three infinite non-abelian groups, and their relationship to a well-known problem of Lehmer.

\subsection{The  discrete Heisenberg group}
It is tempting to  think of the discrete  Heisenberg group $H$ as the limit of $H_p$ as $p\rightarrow \infty$. 
That is, suppose $F(x,y,z)\in \mathbb Z[x,x^{-1},y,y^{-1},z,z^{-1}]$ is a finite sum of the form
$$ F(x,y,z) = \sum_{i,j,k=-\infty}^{\infty} a_{ijk} x^i y^j z^k, $$
thought of as an element in a polynomial ring with non-commuting variables satisfying $xz=zx$, $yz=zy$, but $yx=xyz$.
This suggests defining
$$ m_{H}(F)=\lim_{p\rightarrow \infty} m_{H_p}(F) $$
when this makes sense.
For the special case $F(x,y,z)=f_0(y,z)+x^kf_k(y,z)$ we have from \eqref{binomial}
$$ m_{H_p}(F) = \frac{1}{p} \log \prod_{y^p=1} |f_0(y,1)^p+f_k(y,1)^p|^{1/p^2} + \log \prod_{\stackrel{z^p=1}{z\neq 1}} \left|\prod_{y^p=1} f_0(y,z) + \prod_{y^p=1}f_k(y,z)\right|^{1/p^2}. $$
Assuming non-vanishing, the first term should tend to~0 as $p\rightarrow \infty$.
It seems reasonable then to define the $H$ measure of $F$ by
$$ m_{H}(F) = \int_0^1 \max\left\{ \int_0^1 \log |f_0(e^{2\pi i \theta},e^{2\pi i \xi})|\,d\theta, \int_0^1 \log |f_k(e^{2\pi i \theta},e^{2\pi i \xi})|\,d\theta  \right\} d\xi. $$
This seems consistent with the Heisenberg measure of Lind and Schmidt \cite[Theorem 8.9]{Lind2}, who define the measure as an entropy.

\subsection{The group of rotations and reflections of the plane}

Recall the group  of symmetries of the regular $n$-gon, the dihedral group of order $2n$:
$$ D_{2n} := \langle X,Y\; :\; X^n=1, Y^2=1, XY=YX^{-1}\rangle. $$
In \cite{dihedral} the $D_{2n}$ measure of a polynomial $F(x,y)=f(x)+yg(x)\in\mathbb Z [x,y]$, viewed as  a non-abelian polynomial ring with $x,y$ satisfying the group relations, was defined by
$$ m_{D_{2n}}(F)=\frac{1}{2n} \log |M_{D_{2n}}(F)|,\;\;\; M_{D_{2n}}(F) = \prod_{x^n=1} \left( f(x)f(x^{-1}) - g(x)g(x^{-1}) \right), $$
with $M_{D_{2n}}(F)$ corresponding to the appropriate  group determinant.
Letting $n\to\infty$ (but ignoring cases of vanishing at $x=1$ and other $n$th roots of unity), this suggests a measure  for the group of rotations and reflections of the circle \cite[\S5.5]{Serre},
$$ D_{\infty}=\langle  r_{\alpha}, s \; : \; \alpha\in \mathbb R/\mathbb Z,  \; s^2=1,\;  r_{\alpha}r_{\beta}=r_{\alpha +\beta  \bmod 1}, \;  sr_{\alpha}=r_{-\alpha} s \rangle, $$
where the $r_{\alpha}$ are rotations of angle $2\pi \alpha$ about the origin and $s$ is a reflection.
Namely, we define
\begin{align*}  m_{D_{\infty}}(F) & =\frac{1}{2} \int_{0}^1 \log| f(e^{2\pi i\alpha})f(e^{-2\pi i \alpha}) -  g(e^{2\pi i \alpha})g(e^{-2\pi i\alpha})|\,d\alpha \\
 & = m\left( \sqrt{|f(x)|^2-|g(x)|^2}\right) \end{align*}
for $F(x,y)=f(x)+yg(x) \in \mathbb Z[x,x^{-1},y],$ viewed as an element in the group ring $\mathbb Z[\text{Dih}_{\infty}]$  for the infinite dihedral group
$$ \text{Dih}_{\infty}=\langle x,y\; | \; xy=yx^{-1}, \; y^2=1\rangle. $$
Here, $m(F)$ denotes the traditional  logarithmic Mahler measure for polynomials with complex coefficients,
\[
m(F):=\int_0^1 \log \abs{F(e^{2\pi i\alpha})}\,d \alpha.
\]
Lind \cite{Lind} viewed $m(F)$ as the $\mathbb R/\mathbb Z$ measure for $F$ in $\mathbb Z[x,x^{-1}]$; this could be thought as arising from the $\mathbb Z_n$ measures  as $n\rightarrow \infty$, with $F$ in the group ring for $\mathbb Z=\langle x \rangle$.

Notice that if Lehmer's problem \cite{Lehmer} has a positive answer, that is, there exists a constant $c>0$ such that  for $F$ 
in $\mathbb Z [x]$ either $m(F)=0$ or $m(F)\geq c$, then for $G=D_{\infty}$ we would have for $F\in\mathbb Z[x,y]$
$$ m_G(F)=0 \quad\mathrm{or}\quad m_G(F) \geq c/2. $$
A strong form of Lehmer's problem asks whether the optimal $c$ is
$$ m(x^{10}+x^9-x^7-x^6-x^5-x^4-x^3+x+1)=\log 1.17628\ldots\,. $$ 
Since choosing $f(x)=x^2-1$, $g(x)=x^5+x^4-1$ yields
$$ f(x)f(x^{-1})-g(x)g(x^{-1})=   x^5+x^4-x^2-x-1-x^{-1}-x^{-2}+x^{-4}+x^{-5},  $$
the truth of this strong form would imply that  $\lambda(D_{\infty})=\log \sqrt{1.17628\ldots}$\,.

\subsection{The group $D_{\infty h}$} In \cite{smallgps} (see also \cite{dicyclic}) the measure was defined for the dicyclic group of order $4n$,
$$ Q_{4n} := \langle X,Y\; :\; X^{2n}=1, Y^2=X^n, XY=YX^{-1}\rangle. $$
These are what  Serre \cite[\S5.4]{Serre} called $D_{nh}$, realised as $D_{2n}\times I$ where $I=\{1,\iota\}$, with
$D_{2n}$ viewed as certain reflections and rotations of $3$-space and $\iota$ as the reflection through the origin.
For a polynomial $F(x,y)=f(x)+yg(x),$  $f,g$ in $\mathbb Z[x]/\langle x^{2n}-1 \rangle$, we have
$$ m_{Q_{4n}}(F)=\frac{1}{4n}\log |M_{Q_{4n}}(F)|, $$
where
$$  M_{Q_{4n}}(F)= \prod_{x^{n}=1} \left( f(x)f(x^{-1}) -  g(x)g(x^{-1}) \right) \prod_{x^{n}=-1} \left(  f(x)f(x^{-1}) + g(x)g(x^{-1}) \right). $$
This suggests that for Serre's \cite[\S5.6]{Serre} group $D_{\infty h}$, the group generated by $D_{\infty}$ and the reflection $\iota$ through the origin, we should define,  for any $f(x)$, $g(x)$ in $\mathbb Z[x,x^{-1}]$, 
\begin{align*} m_{D_{\infty h}}(f(x)+yg(x)) & = \frac{1}{4} \int_{0}^1 \log | f(e^{2\pi i\alpha})f(e^{-2\pi i\alpha}) -  g(e^{2\pi i \alpha})g(e^{-2\pi  i\alpha})|\,d\alpha \\
   & +\frac{1}{4}\int_0^1 \log  | f(e^{2\pi i\alpha})f(e^{-2\pi i \alpha}) + g(e^{2\pi i \alpha})g(e^{-2\pi i\alpha})|\,d\alpha \\
 & = m\left( \sqrt[4]{|f(x)|^4- |g(x)|^4 }\right). \end{align*}
Starting with a different standard form for our polynomials in the group ring $\mathbb Z[Q_{4n}]$, we could instead define our $D_{\infty h}$ measure on
$$ F(x,y)=f_0(x)+yf_1(x)+y^2f_2(x)+y^3f_3(x), \;\; f_i(x)\in \mathbb Z[x,x^{-1}], \; y^4=1, \; xy=yx^{-1}, $$
by taking
\begin{align*}
m_{D_{\infty h}}(F)&=\frac{1}{4} m\left(|f_0(x)+f_2(x)|^2-|f_1(x)+f_3(x)|^2\right)\\
&\qquad + \frac{1}{4} m\left(|f_0(x)-f_2(x)|^2+|f_1(x)-f_3(x)|^2\right).
\end{align*}
In either case, a positive answer to Lehmer's problem would  imply that $m_{D_{\infty h}}(F)=0$ or $m_{D_{\infty h}}(F)\geq c/4$.

\end{document}